\newtheorem{lemma}{Lemma}
\newtheorem{prop}[lemma]{Proposition}
\newtheorem{cor}[lemma]{Corollary}
\newtheorem{thm}[lemma]{Theorem}
\newtheorem{rmk}[lemma]{Remark}
\newcommand{\uh}{\mathcal{U}(\mathfrak{h})}
\newcommand{\bb}[1]{\mathbb{#1}}
\newcommand{\ca}[1]{\mathcal{#1}}
\newcommand{\ra}{\rightarrow}
\begin{document}

\title{New family of simple  $\mathfrak{gl}_{2n}(\bb{C})$-modules}
\author{Jonathan Nilsson}
\date{}
\maketitle

\begin{abstract}
\noindent
We construct a new family of simple $\mathfrak{gl}_{2n}$-modules which depends on
$n^2$ generic parameters. Each such module is isomorphic to the regular 
$\ca{U}(\frak{gl}_{n})$-module when restricted the $\mathfrak{gl}_{n}$-subalgebra
naturally embedded into the top-left corner.
\end{abstract}

\section{Introduction}
Classification of simple modules is one of the first natural questions which arises
when studying the representation theory of some (Lie) algebra. Simple modules are,
in some sense, ``building blocks'' for all other modules, and hence understanding simple
modules is important. In some cases, for example for finite dimensional associative 
algebras, classification of simple modules is an easy problem. However, in 
most of the cases, the problem of classification of all simple
modules is very difficult. Thus, if we consider simple, finite dimensional,
complex Lie algebras, then the only algebra for which some kind of classification exists 
is the Lie algebra $\frak{sl}_{2}$. This was obtained by R.~Block in \cite{Bl}, see 
also a detailed explanation in \cite[Chapter~6]{Ma}. However, even in this case the ``answer''
only reduces the problem to classification of equivalence classes of irreducible elements
in a certain non-commutative Euclidean ring.

At the moment, the problem of {\em classification} of simple modules over simple Lie
algebras seems too hard. However, because of its importance, the problem of {\em construction}
of new families of modules attracted a lot of attention over the years. The most studied
case seem to be the one of the Virasoro Lie algebras, where many different multi-parameter
families of simple modules were constructed by various authors, see, for example, 
\cite{OW,GLZ,LZ,LLZ,MZ1,MZ2,MW} and references therein.

In contrast to the Virasoro case, the ``easier'' case of simple, complex, finite dimensional
Lie algebras does not yet have an equally large variety of families of simple modules.
So, let $\mathfrak{g}$ be a complex, finite dimensional, simple Lie algebra.
Some classes of simple $\mathfrak{g}$-modules are, of course, well-understood. For example:
\begin{itemize}
\item simple {\em finite dimensional} modules are classified already by Cartan in 1913,
see \cite{Ca};
\item simple {\em highest weight} modules related to a fixed triangular decomposition 
$\frak{n}_{-} \oplus \frak{h} \oplus \frak{n}_{+}$ of $\mathfrak{g}$ are classified
by their highest weights and are extensively studied during last 50 years, see, for example,
\cite{Dix,Hum2,BGG};
\item simple Whittaker modules in the sense of \cite{Ko}, see also \cite{AP,McD1,McD2};
\item simple {\em Gelfand-Zeitlin} modules, see \cite{DFO,DFO2,Ma0,FGR};
\item simple weight modules with {\em finite dimensional weight spaces} were classified in 
\cite{Mathieu} extending the previous work in \cite{Fe,Fu}; 
\item simple $\mathfrak{g}$-modules which are free of rank one over the universal enveloping
algebra of the Cartan subalgebra were constructed and studied in \cite{Ni1,Ni2}
(see also \cite{TZ1,TZ2} for similar modules over infinite dimensional Lie algebras).
\end{itemize}
Some further classes of simple modules can be found in \cite{FOS}.
We note that the largest known family of simple $\mathfrak{gl}_{n}$-modules is
the one of Gelfand-Zeitlin-modules. It depends on $\frac{n(n+1)}{2}$ generic complex parameters, 
see \cite{DFO,DFO2} for details.

Based on the above, it seems natural to look for new families of simple $\mathfrak{g}$-modules.
The present paper contributes with a new large family of simple $\mathfrak{gl}_{2n}$-modules.
This family is parameterized by invertible $n\times n$ complex matrices.
Let $\ca{A,B,C,D}$ be the four Lie subalgebras of $\frak{gl}_{2n}$ of dimension $n^2$
as indicated in the following figure:
\[\left( \begin{array}{cc}
   \ca{A} & \ca{B}\\
     \ca{C} & \ca{D}
  \end{array} \right).\]
Then $\ca{B}$ is nilpotent (and even commutative), and the adjoint action of $\ca{B}$ 
on $\frak{gl}_{2n} / \ca{B}$ is nilpotent, so $(\ca{B},\frak{gl}_{2n})$
is a \emph{Whittaker pair} in the sense of~\cite{BM}.
The original motivation for this paper was an attempt to describe 
generalized Whittaker modules (i.e. modules on which the action of $\ca{B}$ is locally finite) 
for this Whittaker pair. Our main result can be summarized as follows:

\begin{thm}\label{introthm}
For each non-degenerate complex $n \times n$-matrix $Q$, there exists a simple 
$\mathfrak{gl}_{2n}$ module $M$ with the following properties:
\begin{itemize}
\item $M$ has Gelfand-Kirillov dimension $n^{2}$;
\item $Res_{\ca{A}}^{\frak{gl}_{2n}}M$ is isomorphic to the left regular $\ca{U(A)}$-module;
\item $Res_{\ca{B}}^{\frak{gl}_{2n}}M$ is locally finite. In other words, $M$ is a 
generalized Whittaker module for the Whittaker pair $(\ca{B},\frak{gl}_{2n})$;
\item With respect to a fixed PBW basis in $\ca{U(A)}$, the action of each fixed element from 
$\ca{A,B,C,D}$ can be written explicitly as maps $\ca{U(A)} \rightarrow \ca{U(A)}$ of 
degrees $1,0,2,1$, respectively.
\end{itemize}
Moreover, different matrices $Q$ give non-isomorphic modules.
\end{thm}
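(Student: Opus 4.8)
The plan is to realise $M$ directly on the vector space $\mathcal{U}(\mathcal{A})$, where $\mathcal{A}\cong\mathfrak{gl}_n$ is the top-left corner. Write $a_{ij},b_{ij},c_{ij},d_{ij}$ for the matrix units lying in the blocks $\mathcal{A},\mathcal{B},\mathcal{C},\mathcal{D}$, and record the handful of non-trivial brackets among them. I would then define an operator $\rho(X)$ on $\mathcal{U}(\mathcal{A})$ for each basis element $X$: let $\rho(a_{ij})=L_{a_{ij}}$ be left multiplication; let $\rho(d_{ij})$ be a right multiplication $R_{w_{ij}}$ with $w_{ij}\in\mathbb{C}\oplus\mathcal{A}$ realising an anti-automorphism of $\mathcal{A}$; let $\rho(b_{ij})=q_{ij}\cdot\mathrm{id}+T_{ij}$, where the $q_{ij}$ are the entries of $Q$ and $T_{ij}$ has filtration degree $\le-1$; and let $\rho(c_{ij})$ be an operator of filtration degree $2$, quadratic in the $L$'s and $R$'s, whose coefficients involve the entries of $Q^{-1}$. (This last point is where non-degeneracy of $Q$ is needed merely to write the module down.) Several of these assignments are forced: $[\mathcal{A},\mathcal{D}]=0$ forces $\rho(\mathcal{D})$ to be left $\mathcal{U}(\mathcal{A})$-linear, hence a right multiplication, and the $\mathcal{D}$-relations pin $w_{ij}$ down up to the choice of an anti-automorphism of $\mathcal{A}$ and a central shift; while the relation $[\rho(a_{k\ell}),\rho(b_{ij})]=\delta_{\ell i}\rho(b_{kj})$ forces the order-$0$ symbol of $\rho(b_{ij})$ to be the scalar $q_{ij}$ and the order-$(-1)$ symbol to be the constant-coefficient vector field $-\sum_k q_{kj}\,\partial_{a_{ki}}$ on $S(\mathcal{A})$. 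The remaining freedom is removed by the mixed relations below.

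The core of the argument is then to check that $[\rho(X),\rho(Y)]=\rho([X,Y])$ for all basis elements $X,Y$, so that $\rho$ extends to an action of $\mathcal{U}(\mathfrak{gl}_{2n})$ on $M:=\mathcal{U}(\mathcal{A})$. Since left and right multiplications commute, the blocks $[\mathcal{A},\mathcal{A}]$, $[\mathcal{A},\mathcal{D}]$, $[\mathcal{D},\mathcal{D}]$ are immediate, and $[\mathcal{A},\mathcal{B}]$, $[\mathcal{D},\mathcal{B}]$, $[\mathcal{B},\mathcal{B}]$, $[\mathcal{A},\mathcal{C}]$, $[\mathcal{D},\mathcal{C}]$ reduce to short identities among the symbols of the operators. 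The delicate checks are $[\mathcal{C},\mathcal{C}]=0$ and $[\mathcal{B},\mathcal{C}]\subseteq\mathcal{A}\oplus\mathcal{D}$, where the quadratic coefficients built from $Q^{-1}$ must combine exactly with the linear data of $\rho(\mathcal{B})$ and $\rho(\mathcal{D})$ to reproduce $[b_{ij},c_{k\ell}]=\delta_{jk}a_{i\ell}-\delta_{i\ell}d_{kj}$; I expect this bookkeeping, together with guessing the correct formula for $\rho(\mathcal{C})$, to be the main obstacle. Once $\rho$ is a representation, three of the four listed properties are built in: $\mathrm{Res}_{\mathcal{A}}M$ is the left regular module since $\rho(a)=L_a$ and $\rho(u)\cdot 1=u$; the filtration degrees $1,0,2,1$ are how the operators were defined; and $\mathrm{GKdim}\,M=n^{2}$ because $M\cong\mathcal{U}(\mathfrak{gl}_n)$ as a vector space while the degree bounds squeeze $\mathcal{U}_d(\mathfrak{gl}_{2n})\cdot 1$ between $\mathcal{U}_d(\mathcal{A})$ and $\mathcal{U}_{2d}(\mathcal{A})$. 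Local finiteness of $\mathcal{B}$ follows because $\rho(b_{ij})-q_{ij}=T_{ij}$ strictly lowers the PBW filtration, hence is nilpotent on each finite-dimensional $\mathcal{U}_d(\mathcal{A})$, so $M=\bigcup_d\mathcal{U}_d(\mathcal{A})$ is a union of finite-dimensional $\mathcal{B}$-submodules.

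For simplicity, let $N\subseteq M$ be a nonzero submodule and pick $0\ne m\in N$ of minimal PBW degree $d$, with leading symbol $0\ne\sigma_d(m)\in S^d(\mathcal{A})$. Applying $\rho(b_{ij})$ gives again an element of $N$ of degree $\le d$, whose degree-$(d-1)$ part is $\bigl(-\sum_k q_{kj}\,\partial_{a_{ki}}\bigr)\sigma_d(m)$. As $(i,j)$ ranges over all pairs, these vector fields span all the partial derivatives on $S(\mathcal{A})$ precisely because $Q$ is invertible; hence if $d\ge 1$ then $\sigma_d(m)$ is non-constant and some $\rho(b_{ij})m\in N$ is nonzero of degree $d-1$, contradicting minimality. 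Therefore $d=0$, so $N$ contains a nonzero scalar, whence $1\in N$ and $N=\rho(\mathcal{U}(\mathcal{A}))\cdot 1=M$. This also explains the role of the hypothesis: if $Q$ were singular, the derivatives supplied by $\mathcal{B}$ would fail to span and $M$ would acquire proper submodules.

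Finally, for the non-isomorphism statement, suppose $\Theta\colon M_Q\to M_{Q'}$ is an isomorphism of $\mathfrak{gl}_{2n}$-modules. Restricting to $\mathcal{A}$, $\Theta$ is a bijective left $\mathcal{U}(\mathcal{A})$-linear map $\mathcal{U}(\mathcal{A})\to\mathcal{U}(\mathcal{A})$, hence $\Theta(u)=u\,\Theta(1)$ with $\Theta(1)$ a unit of $\mathcal{U}(\mathfrak{gl}_n)$; but $\mathcal{U}(\mathfrak{gl}_n)$ is a filtered domain with polynomial associated graded, so its only units are the nonzero scalars and $\Theta$ is scalar multiplication. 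Then $\Theta$ commutes with every operator, so $\rho_Q(X)=\rho_{Q'}(X)$ for all $X$; comparing the order-$0$ symbols of $\rho_Q(b_{ij})$ and $\rho_{Q'}(b_{ij})$ yields $q_{ij}=q'_{ij}$ for all $i,j$, i.e.\ $Q=Q'$. Hence distinct non-degenerate matrices give non-isomorphic modules.
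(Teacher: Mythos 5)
Your plan is essentially the Section~3 construction of the paper: define the representation directly on $\mathcal{U}(\mathcal{A})$ by giving filtration-degree $1,0,2,1$ operators for $\mathcal{A},\mathcal{B},\mathcal{C},\mathcal{D}$, and then verify the brackets. Your ancillary arguments are sound and close to the paper's: the local-finiteness argument ($\rho(b_{ij})$ preserves each filtration piece and acts there as a scalar plus a nilpotent) is exactly the paper's, your simplicity argument (the order $-1$ symbols of $\rho(b_{ij})$ span all constant-coefficient derivations on $S(\mathcal{A})$ when $Q$ is invertible, so one can keep lowering degree) coincides with the proof of Proposition~\ref{sprop}, and your non-isomorphism argument (a $\mathcal{U}(\mathcal{A})$-linear bijection is right multiplication by a unit, hence a scalar) is a clean variant of the paper's observation that an isomorphism must send generator to generator.

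However, the proposal has a genuine gap precisely where you flag it. You do not produce the operator $\rho(c_{ij})$ and you do not verify the relations $[\rho(\mathcal{C}),\rho(\mathcal{C})]=0$ and $[\rho(\mathcal{B}),\rho(\mathcal{C})]\subset\rho(\mathcal{A}+\mathcal{D})$; you only observe that these are ``the main obstacle.'' But these are not bookkeeping: producing a formula for $\mathcal{C}$ which closes the algebra is the whole technical content of the result, and it is not forced by degree considerations alone. In the paper this is done by writing
\[
\rho\!\left( \begin{array}{cc} 0 & 0 \\ C & 0 \end{array} \right) a
= -\,tr(\varphi(a).F^{2}.Q^{-T}.C)\;-\;tr(\varphi(a).Q^{-T}.C)\,tr(F),
\]
where $F=\sum_{ij}e_{ji}\otimes e_{ij}$ and $tr(F)$ is the first Gelfand invariant; the term $tr(\varphi(a).Q^{-T}.C)\,tr(F)$ is a degree-$1$ correction with a central coefficient, and it is indispensable. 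Without it the $[\mathcal{C},\mathcal{C}]$ relation fails, and guessing that such a Casimir-weighted correction is needed is nontrivial. The verification itself relies on a separate identity (Lemma~\ref{glemma}: $[A,tr(B.F^{m})]=tr([A,B].F^{m})$) plus a long explicit cancellation for the $[\mathcal{B},\mathcal{C}]$ and $[\mathcal{C},\mathcal{C}]$ cases. Your proposal would need to either discover this formula and carry out the verification, or replace it by the paper's Section~2 injectivity argument (the induced module $M_Q$ is an injective $\mathcal{U}(\mathcal{B})$-module, which allows one to successively kill off $\mathcal{D}$- and $\mathcal{C}$-generated extensions and obtain the simple quotient abstractly). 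As written, the existence of the module is asserted but not established.

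One small additional caveat: you claim that ``non-degeneracy of $Q$ is needed merely to write the module down.'' This understates its role. Non-degeneracy is also exactly what makes $M_Q$ simple as an $\mathcal{A}+\mathcal{B}$-module (for singular $Q$ with $Q^{T}A_0=0$, the subspace $\mathcal{U}(\mathcal{A})A_0$ is a proper $\mathcal{A}+\mathcal{B}$-submodule), and your own simplicity argument already uses invertibility of $Q$ to make the derivatives span; so the two uses are not independent.
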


The paper is organized as follows. 
Section 2 introduces notation and lays down some motivation for the construction of our modules. 
In the same section, for each non-degenerate complex $n \times n$-matrix $Q$, we construct an 
$\ca{A+B}$-module having the first three properties listed in Theorem~\ref{introthm}.
We show that there must exist a simple quotient of the corresponding induced $\frak{gl}_{2n}$ 
module that also has the fourth property. In Section 3 we explicitly construct such a module for $Q$
being the identity matrix $I$ and show that every other module in our family can be obtained by 
twisting this module by an explicit automorphism. Finally, we give explicit formulas for 
the $\frak{gl}_{2n}$-action in all cases.

\noindent
{\bf Acknowledgements}
I am very grateful to Volodymyr Mazorchuk for his ideas and comments.

\section{Motivation and existence}
\subsection{Setup}
Let $\mathfrak{g}:=\mathfrak{gl}_{2n}(\bb{C})$. All Lie algebras and vector spaces are over the complex numbers. $\bb{N}$ denotes the set of nonnegative integers.

First we observe that the subalgebras $\ca{A}$ and $\ca{D}$ defined above are both
isomorphic to $\frak{gl}_{n}$ while the subalgebras $\ca{B}$ and $\ca{C}$ are commutative.
Let $e_{i,j}$ be the $2n \times 2n$-matrix with a single $1$ in position $(i,j)$ and 
zeros elsewhere. By convention, most indices $i,j$ etc. can be assumed to lie between $1$ and $n$;
 in particular our canonical basis for $\frak{gl}_{2n}$ will be written
\[\bigcup_{1 \leq i,j \leq n} \{e_{i,j},e_{n+i,j},e_{i,n+j},e_{n+i,n+j}\}.\]
We denote the identity matrix by $I$, its size ($n$ or $2n$) should be apparent by the context. The transpose of a matrix $A$ is denoted $A^{T}$ and if $A$ is invertible we abbreviate $(A^{-1})^{T}$ by $A^{-T}$.

We also recall how to construct \emph{twisted modules}. For every Lie algebra automorphism $\varphi \in Aut(\frak{g})$ we have a twisting functor
 $F_{\varphi}: \frak{g}\text{-mod} \rightarrow \frak{g}\text{-mod}$ which is an auto-equivalence. It maps a module $M$ to ${}^{\varphi}M$ which is isomorphic to $M$ as a vector space
 but has modified action: $x \bullet v := \varphi(x) \cdot v$ for all $x \in \frak{g}$ and $v \in {}^{\varphi}M$.

\subsection{Existence of simple generalized Whittaker Modules for $\mathfrak{gl}_{2n}$}
Following Kostant's idea in~\cite{Ko} we try to construct some modules on which the action of $\ca{B}$ is locally finite.

Fix Lie algebra homomorphisms $\lambda_{A}: \ca{A} \rightarrow \bb{C}$ and $\lambda_{D}: \ca{D} \rightarrow \bb{C}$.
Let $\bb{C}_{\lambda_{A}\lambda_{D}}$ be the one dimensional $(\ca{A} + \ca{C} + \ca{D})$-module where $\ca{A}$ acts by $\lambda_{A}$, $\ca{D}$ acts by $\lambda_{D}$ and $\ca{C}$ acts trivially.
Now define a generalized Verma module 
\[M_{\lambda_{A}\lambda_{D}} := U(\mathfrak{gl}_{2n}) \bigotimes_{U(\ca{A} + \ca{C} + \ca{D})}\bb{C}_{\lambda_{A}\lambda_{A}}.\]
Denote by $M_{\lambda_{A}\lambda_{D}}^{*}$ the full dual of $M_{\lambda_{A}\lambda_{D}}$. This is a $\mathfrak{gl}_{2n}$ module where the action is given $(x \cdot f)(m) = -f(x\cdot m)$ as usual.

\begin{prop}
 For every $\theta: \ca{B} \rightarrow \bb{C}$, there is a unique (up to multiple) eigenvector $w$ in $M_{\lambda_{A}\lambda_{D}}^{*}$ with eigenvalue $\theta$ for $\ca{B}$.
\end{prop}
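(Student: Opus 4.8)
The plan is to analyze $M_{\lambda_A\lambda_D}^*$ via the PBW decomposition of the generalized Verma module $M_{\lambda_A\lambda_D}$. Since $\mathfrak{gl}_{2n} = \ca{B} \oplus (\ca{A} + \ca{C} + \ca{D})$ as a vector space, the PBW theorem gives $U(\mathfrak{gl}_{2n}) \cong U(\ca{B}) \otimes U(\ca{A}+\ca{C}+\ca{D})$ as right $U(\ca{A}+\ca{C}+\ca{D})$-modules, hence $M_{\lambda_A\lambda_D} \cong U(\ca{B})$ as a $U(\ca{B})$-module (acting by left multiplication), and in particular $M_{\lambda_A\lambda_D}$ is a free $U(\ca{B})$-module of rank one, generated by the canonical generator $1 \otimes 1$. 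Because $\ca{B}$ is commutative, $U(\ca{B})$ is just the polynomial ring $\bb{C}[\ca{B}]$, so $M_{\lambda_A\lambda_D}$ has a basis indexed by monomials in a fixed basis $b_1,\dots,b_{n^2}$ of $\ca{B}$.

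Next I would translate the eigenvector condition into this basis. An element $w \in M_{\lambda_A\lambda_D}^*$ is a $\ca{B}$-eigenvector with eigenvalue $\theta$ iff $(b \cdot w)(m) = \theta(b)\, w(m)$ for all $b \in \ca{B}$ and all $m \in M_{\lambda_A\lambda_D}$; unwinding the dual action, this says $-w(b \cdot m) = \theta(b) w(m)$, i.e. $w(b \cdot m) = -\theta(b) w(m)$. Taking $m$ to run over the monomial basis $b^{\alpha} := b_1^{\alpha_1}\cdots b_{n^2}^{\alpha_{n^2}}$ and using that $b \cdot b^{\alpha}$ is again a monomial (left multiplication in the commutative algebra $U(\ca{B})$), this recurrence determines $w(b^{\alpha})$ from $w(1)$: one gets $w(b^{\alpha}) = (-1)^{|\alpha|}\theta(b_1)^{\alpha_1}\cdots\theta(b_{n^2})^{\alpha_{n^2}} w(1)$ by induction on $|\alpha|$. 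Thus $w$ is uniquely determined by the scalar $w(1)$, giving the uniqueness-up-to-scalar claim, and the main point remaining is existence: one must check that the function $w$ defined on the basis by this formula actually respects the eigenvector equation for every $b \in \ca{B}$ and every $m$, not merely for $m = b^{\alpha}$. Since both sides of $w(b \cdot m) = -\theta(b)w(m)$ are linear in $m$ and the formula was built precisely so that the relation holds on the spanning set $\{b^{\alpha}\}$ while $b \cdot b^{\alpha} = b_i b^{\alpha}$ just shifts the multi-index, this is a direct verification; the key structural input is the freeness of $M_{\lambda_A\lambda_D}$ over $U(\ca{B})$, which makes the monomials a genuine basis and prevents any hidden relations among the values $w(b^{\alpha})$.

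The main obstacle, such as it is, is bookkeeping: one must be careful that the PBW identification really does make $\ca{B}$ act by honest left multiplication on a polynomial ring (so that $b_i \cdot b^{\alpha}$ has no lower-order correction terms — here commutativity of $\ca{B}$ is what saves us), and that taking the full dual rather than a restricted dual is what guarantees a well-defined functional $w$ exists with prescribed values on the infinite basis $\{b^{\alpha}\}$. I would therefore state explicitly the isomorphism $M_{\lambda_A\lambda_D} \cong U(\ca{B})$ of $U(\ca{B})$-modules as the first line of the proof, then define $w$ by $w(b^{\alpha}) = (-\theta)^{\alpha} w(1)$ on the monomial basis and extend by linearity, and finally verify the eigenvector equation on the basis, noting uniqueness follows from the same recurrence run in reverse.
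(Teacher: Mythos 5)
Your argument is correct, and it rests on the same structural pivot as the paper's proof, namely the PBW observation that $M_{\lambda_{A}\lambda_{D}}$ restricts to the free rank-one $U(\ca{B})$-module $U(\ca{B})$; but from that point on your route is genuinely different. The paper applies the tensor-hom adjunction to get
$Hom_{U(\ca{B})}(\bb{C}(\theta), Hom_{\bb{C}}(U(\ca{B}),\bb{C})) \simeq Hom_{\bb{C}}(U(\ca{B})\otimes_{U(\ca{B})}\bb{C}(\theta),\bb{C}) \simeq \bb{C}$ in three lines, so the whole statement (existence plus uniqueness up to scalar) drops out of a one-dimensional Hom-space. You instead unwind the eigenvector equation into a recurrence on the monomial basis of $U(\ca{B})$, pinning down $w(b^{\alpha}) = (-1)^{|\alpha|}\theta(b_1)^{\alpha_1}\cdots\theta(b_{n^2})^{\alpha_{n^2}}\,w(1)$ and then verifying that this prescription is consistent. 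The paper's adjunction argument is shorter and avoids any basis bookkeeping, and it also makes transparent that the result is really a statement about the coinduced module $Hom_{\bb{C}}(U(\ca{B}),\bb{C})$; your computation is more elementary and has the side benefit of producing the eigenvector $w$ in closed form, which the abstract isomorphism does not. One small remark: you should note explicitly that $\theta$, being a linear functional on the abelian Lie algebra $\ca{B}$, extends uniquely to a $\bb{C}$-algebra homomorphism $U(\ca{B})\to\bb{C}$, which is what makes your formula for $w(b^{\alpha})$ well defined and the consistency check on the spanning set automatic; you gesture at this via commutativity but it is worth saying. With that noted, the argument is complete.
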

\begin{proof}
Note that $M_{\lambda_{A}\lambda_{D}} \simeq U(\ca{B})$ as a left and right $U(\ca{B})$-module. Let $\bb{C}(\theta)$ be the $1$-dimensional $\ca{B}$-module where the action is given by $\theta$.
By the tensor-hom adjunction we have
\begin{align*}
 Hom_{U\ca{(B)}}(\bb{C}(\theta),M_{\lambda_{A}\lambda_{D}}^{*}) &=  Hom_{U\ca{(B)}}(\bb{C}(\theta),Hom_{\bb{C}}\big( M_{\lambda_{A}\lambda_{D}},\bb{C}) \big)\\
&\simeq Hom_{U\ca{(B)}}(\bb{C}(\theta),Hom_{\bb{C}}\big( U\ca{(B)},\bb{C}) \big)\\
& \simeq Hom_{\bb{C}}(U\ca{(B)} \otimes_{U\ca{(B)}}\bb{C}(\theta),\bb{C})\\
& \simeq Hom_{\bb{C}}(\bb{C}(\theta),\bb{C}) \simeq \bb{C}.\\
\end{align*}
Thus there is a unique $1$-dimensional subspace of $M_{\lambda_{A}\lambda_{D}}^{*}$ isomorphic to $\bb{C}(\theta)$ in $\ca{B}$-mod, which is equivalent to the statement in the proposition.
\end{proof}
The submodule generated by such an eigenvector must be simple (see~\cite{BM}), so we get the following result.
\begin{cor}
 There exist simple generalized Whittaker modules for the pair $(\ca{B},\frak{gl}_{2n})$ and they can be realized as simple submodules in the dual of the generalized Verma module $M_{\lambda_{A}\lambda_{D}}^{*}$.
\end{cor}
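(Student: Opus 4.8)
The plan is to assemble the Corollary directly from the preceding Proposition together with the cited result of~\cite{BM}. First I would fix arbitrary homomorphisms $\lambda_{A}\colon\ca{A}\to\bb{C}$ and $\lambda_{D}\colon\ca{D}\to\bb{C}$ and an arbitrary $\theta\colon\ca{B}\to\bb{C}$, and form the generalized Verma module $M_{\lambda_{A}\lambda_{D}}$ together with its full dual $M_{\lambda_{A}\lambda_{D}}^{*}$. By the Proposition there is a nonzero vector $w\in M_{\lambda_{A}\lambda_{D}}^{*}$, unique up to scalar, on which $\ca{B}$ acts via the character $\theta$; in particular $w$ is a Whittaker vector for the Whittaker pair $(\ca{B},\frak{gl}_{2n})$. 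Let $N$ be the $\frak{gl}_{2n}$-submodule of $M_{\lambda_{A}\lambda_{D}}^{*}$ generated by $w$.

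Next I would argue that $N$ is simple. The key point is that $w$ is, up to scalar, the \emph{only} $\theta$-eigenvector for $\ca{B}$ in the ambient module $M_{\lambda_{A}\lambda_{D}}^{*}$, hence a fortiori the only one in $N$. Since $(\ca{B},\frak{gl}_{2n})$ is a Whittaker pair, every nonzero submodule of a module on which $\ca{B}$ acts locally finitely must contain a $\ca{B}$-eigenvector, and general facts about Whittaker modules (the relevant statement is exactly the one quoted from~\cite{BM}, namely that a Whittaker module generated by a Whittaker vector which spans the corresponding eigenspace is simple) then force $N$ to be simple: any nonzero submodule of $N$ would have to contain a $\theta$-eigenvector for $\ca{B}$, which must be a scalar multiple of $w$, and hence that submodule contains $w$ and therefore equals $N$. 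I should also note in passing that $N$ is indeed a generalized Whittaker module, i.e. $\ca{B}$ acts locally finitely on it: this follows because $\ca{B}$ acts locally finitely on all of $M_{\lambda_{A}\lambda_{D}}^{*}$ (a Whittaker vector generates a $U(\ca{B})$-module on which $\ca{B}-\theta$ acts nilpotently, and this property is inherited by the $U(\frak{gl}_{2n})$-submodule it generates because $[\ca{B},\frak{gl}_{2n}]$ acts nilpotently modulo $\ca{B}$, by the Whittaker-pair condition already verified in the introduction).

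The main obstacle, such as it is, is not a computation but making sure the hypotheses of the \cite{BM} simplicity criterion are literally met: one needs that $w$ spans the full $\theta$-eigenspace (not merely generalized eigenspace) of $\ca{B}$ inside the submodule it generates, and the Proposition gives exactly this because it establishes uniqueness of the eigenvector in the entire dual module $M_{\lambda_{A}\lambda_{D}}^{*}$, so uniqueness inside the submodule $N\subseteq M_{\lambda_{A}\lambda_{D}}^{*}$ is automatic. Finally, varying $\lambda_{A},\lambda_{D},\theta$ shows there are many such modules, and each is by construction realized as a simple submodule of $M_{\lambda_{A}\lambda_{D}}^{*}$, which is the assertion of the Corollary.
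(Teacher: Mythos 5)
Your argument follows the same route the paper intends: take the unique $\theta$-eigenvector $w$ furnished by the Proposition, and invoke the general theory of Whittaker pairs from~\cite{BM} to conclude that the $\frak{gl}_{2n}$-submodule $N$ generated by $w$ is simple. You unpack the content of the citation rather than just pointing at it, which is fine.

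The one genuine misstep is the assertion that ``$\ca{B}$ acts locally finitely on all of $M_{\lambda_{A}\lambda_{D}}^{*}$.'' That is false. As a $U(\ca{B})$-module, $M_{\lambda_{A}\lambda_{D}}\cong U(\ca{B})$ is a polynomial algebra in $n^2$ variables, and its \emph{full} linear dual is far from $\ca{B}$-locally-finite: already for $n=1$, with $U(\ca{B})=\bb{C}[x]$, a functional $f$ with $f(x^k)=c_k$ for a sequence $(c_k)$ satisfying no linear recurrence generates an infinite-dimensional $\bb{C}[x]$-submodule under the coregular action $(x\cdot f)(x^k)=-f(x^{k+1})$. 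Happily, the parenthetical you then add is the correct argument and is all you need: $(\ca{B}-\theta)$ acts nilpotently on the line $\bb{C}w$, and because $\mathrm{ad}(\ca{B})$ is nilpotent on $\frak{gl}_{2n}/\ca{B}$ this local $(\ca{B}-\theta)$-nilpotence propagates to the whole cyclic submodule $N=U(\frak{gl}_{2n})w$; that yields both local finiteness of $\ca{B}$ on $N$ and the fact that every $\ca{B}$-eigenvector in $N$ has eigenvalue $\theta$, which feeds your simplicity argument via the uniqueness of $w$. Delete the false blanket claim, promote the parenthetical to the main argument, and the proof is complete and coincides with what the paper is implicitly appealing to in~\cite{BM}.
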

The drawback with this approach in our case is that it is difficult to say anything more explicit about the resulting modules as $M_{\lambda_{A}\lambda_{D}}^{*}$ is very big and inconvenient to work in.

\subsection{An $\ca{A+B}$-module}
\subsubsection{Construction and a formula for the action}
We now turn to a more explicit construction. Note that $\ca{B}$ is commutative. Let $Q=(q_{ij})$ be a nonsingular $n\times n$ matrix and define $L_{Q}$ to be the $1$-dimensional $U(\ca{B})$-module with generator $v$ where
the action of $\ca{B}$ is given by $Q$: \[ e_{i,n+j} \cdot v := q_{i,j}v \qquad 1 \leq i,j \leq n.\] 

Define an induced module
\[M_{Q}:= Ind_{\ca{B}}^{\ca{A+B}} L_{Q} = U(\ca{A}+\ca{B}) \bigotimes_{U(\ca{B})} L_{Q}.\]
Then $M_{Q}$ is clearly isomorphic to $U\ca{(A)}$ as a left $\ca{A}$-module, and for $a \in U\ca{(A)}$ we shall write just $av$ or just $a$ for $a \otimes v$.
To explicitly see how $\ca{B}$ acts on $M_{Q}$, we introduce some more notation.
Consider $U\ca{(A)} \otimes_{\bb{C}} \ca{A}$ as a tensor product in the category of unital associative algebras.  This becomes an infinite dimensional Lie algebra under the commutator bracket. 
Note that $U\ca{(A)} \otimes \ca{A} \simeq Mat_{n \times n}(\ca{U(A)})$ in a natural way and we shall even extend the trace function to $U\ca{(A)} \otimes \ca{A}$ by defining $tr(a \otimes B):=a \:tr(B)$.
Note also that $\ca{A}$ embeds into $U\ca{(A)} \otimes \ca{A}$ (as both associative algebra and Lie algebra) by the map $A \mapsto 1 \otimes A$, and we shall sometimes need to identify
elements of $\ca{A}$ with their images under this map.
To resolve some ambiguity in our notation, for $A,B \in \ca{A}$ we shall write $AB$ for the product in $\ca{U(A)}$ and $A.B$ for the product in the associative algebra $\ca{A}$ or $U\ca{(A)} \otimes \ca{A}$.

Let $\psi': \ca{A} \rightarrow U\ca{(A)} \otimes_{\bb{C}} \ca{A}$ be the Lie algebra homomorphism defined by 
\[\psi': A \mapsto A \otimes I -1 \otimes A^{T}.\]
This extends to an algebra homomorphism $\psi: U\ca{(A)} \rightarrow U\ca{(A)} \otimes_{\bb{C}} \ca{A}$.

\begin{lemma}
The action of $\ca{B}$ on $M_{Q}$ is given by
\[\left( \begin{array}{cc}0 & B\\0 & 0\end{array} \right) av = tr(\psi(a).Q.B^{T})v.\]
\end{lemma}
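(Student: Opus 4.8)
The plan is to compute the action of $e_{i,n+j}$ on a general element $av \in M_Q$ directly from the definition of the induced module, by moving the $\ca{B}$-element past the $\ca{U(A)}$-part $a$ until it hits the generator $v$, where it acts by the scalars $q_{i,j}$. First I would reduce to the case $a$ a monomial $A_1 A_2 \cdots A_k$ with each $A_\ell \in \ca{A}$, and induct on $k$. The key commutation identity inside $\frak{gl}_{2n}$ is the bracket between an $\ca{A}$-element $A = \sum_{p,q} a_{pq} e_{p,q}$ and a $\ca{B}$-element $B' = e_{i,n+j}$: one has $[A, B'] = \sum_p a_{pi} e_{p,n+j}$, which again lies in $\ca{B}$; equivalently, writing $B' = e_i (e_j)^T$ in the obvious $n\times n$-matrix-unit notation, $[A, B']$ corresponds to $A . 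B'$ (associative matrix product on the left). Hence for $a = A \cdot a'$ we get, in $U(\frak{gl}_{2n})$,
\[
B' \cdot (A a' v) = A B' \cdot (a' v) - [A,B'] \cdot (a' v) = A\,\big(B'\cdot(a'v)\big) - (A.B')\cdot(a'v).
\]

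Applying the induction hypothesis to $a'$ and reassembling, I would show that each $\ca{A}$-factor $A_\ell$ contributes either a left multiplication by $A_\ell$ in $\ca{U(A)}$ (the "$A\otimes I$" term) or, when the $\ca{B}$-element is commuted through it, an "$A^T$"-type correction acting on the matrix indices of $B^T Q$ — precisely the two terms in $\psi'(A) = A\otimes I - 1\otimes A^T$. Organizing the $2^k$ terms produced by expanding the recursion, the left-multiplication contributions assemble into the first tensor factor and the index-corrections compose in the associative algebra $\ca{A}$, which is exactly the content of extending $\psi'$ multiplicatively to the algebra homomorphism $\psi: \ca{U(A)} \to \ca{U(A)}\otimes\ca{A}$. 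The scalars $q_{i,j}$ enter at the last step when $B'$ meets $v$, and summing over the matrix indices turns the bookkeeping into the trace $tr(\psi(a).Q.B^T)$: the transpose on $B$ accounts for the fact that $e_{i,n+j}$ corresponds to $e_i e_j^T$ while the action reads off the $(i,j)$-entry of $Q$, and the transpose built into $\psi'$ is matched against it.

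Concretely, I would first verify the formula for $a = 1$ (both sides give $tr(Q.B^T)v = \sum_{i,j} q_{i,j} b_{i,j} v$, matching $e_{i,n+j}\cdot v = q_{i,j} v$ with $B = \sum b_{i,j} e_{i,n+j}$... here one must be slightly careful that $B^T$ has entries $b_{j,i}$, so the indices pair up correctly), then for $a = A$ a single generator, and then carry out the inductive step using the bracket identity above together with the fact that $\psi$ is an algebra homomorphism so $\psi(A a') = \psi'(A)\psi(a')$ in $\ca{U(A)}\otimes\ca{A}$. The main obstacle I anticipate is purely organizational rather than conceptual: keeping the matrix-transpose conventions consistent across the three different "products" in play (the product $AB$ in $\ca{U(A)}$, the product $A.B$ in $Mat_{n\times n}(\bb{C})$, and the tensor-algebra product in $\ca{U(A)}\otimes\ca{A}$), and making sure that the trace pairing $tr(\,\cdot\,.Q.B^T)$ is cyclic in the right way so that the correction terms generated on the left by commutators can equivalently be viewed as acting on $Q.B^T$ on the appropriate side. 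Once the conventions are pinned down, the induction is routine. It is worth double-checking the formula on a small case, say $n=1$ with $A = e_{1,1}$, to catch sign or transpose errors before committing to the general computation.
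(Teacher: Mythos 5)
Your proposal is correct and follows essentially the same route as the paper: induction on the degree of the monomial $a$, commuting the $\ca{B}$-element past one $\ca{A}$-factor at a time using the bracket identity $[A,e_{i,n+j}]=\sum_p a_{pi}e_{p,n+j}$, and reassembling the resulting ``left-multiplication'' and ``$A^T$-correction'' terms into $\psi(A).\psi(a)$ via the multiplicativity of $\psi$ and cyclicity of the trace. The paper just carries out the single inductive step $a\mapsto Aa$ explicitly rather than expanding the full $2^k$-term recursion at once.
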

\begin{proof}
 This follows by induction on the degree of $a$ as follows. The lemma clearly holds for $a=1$ by the definition of the action of $\ca{B}$ on $L_{Q}$: we have $tr(Q.B^{T}) = \sum_{ij}q_{ij}b_{ij}$.
 Suppose the lemma holds for all monomials $a$ of a fixed degree (with respect to any fixed PBW basis). 
We then have
\begin{align*}
 \left( \begin{array}{cc}0 & B\\0 & 0\end{array} \right) (Aa)v &=  A \left( \begin{array}{cc}0 & B\\0 & 0\end{array} \right) av 
+ \big[ \left( \begin{array}{cc}0 & B\\0 & 0\end{array} \right), \left( \begin{array}{cc}A & 0\\0 & 0\end{array} \right) \big]av\\
&=A \left( \begin{array}{cc}0 & B\\0 & 0\end{array} \right) av -  \left( \begin{array}{cc}0 & A.B\\0 & 0\end{array} \right)av\\
&=A \: tr(\psi(a).Q.B^{T})v -  tr(\psi(a).Q.(A.B)^{T})v\\
&=tr((A \otimes I).\psi(a).Q.B^{T})v -  tr(A^{T}.\psi(a).Q.B^{T})v\\
&=tr(((A \otimes I)-1 \otimes A^{T}).\psi(a).Q.B^{T})v\\
&=tr(\psi(A).\psi(a).Q.B^{T})v\\
&=tr(\psi(Aa).Q.B^{T})v.\\
\end{align*}
This shows that the lemma holds for all monomials in $\ca{U(A)}$ by induction. Since $\psi$ is linear it holds for all of $\ca{U(A)}$.
\end{proof}

\subsubsection{Proof of simplicity}
We proceed to prove that $M_{Q}$ is simple by first proving it for $Q=I$.

\begin{lemma}
\label{rel}
 The following relations hold in $U\ca{(A+B)}$.
\[
[e_{j,k+n},e_{i,j}^{m}] = \begin{cases}
                         -m \: e_{i,j}^{m-1} e_{i,k+n} & \text{ for } i \neq j\\
			 ((e_{i,j}-1)^{m} -e_{i,j}^{m})e_{i,k+n} & \text{ for } i = j.  \\
                        \end{cases}
\]
\end{lemma}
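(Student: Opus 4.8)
The plan is to prove the identity by induction on the exponent $m$, treating the two cases $i \neq j$ and $i = j$ in parallel since both reduce to understanding the single commutator $[e_{j,k+n}, e_{i,j}]$ and then iterating. First I would record the base cases: for $m = 0$ both sides are zero, and for $m = 1$ a direct computation with the matrix-unit bracket $[e_{a,b}, e_{c,d}] = \delta_{b,c} e_{a,d} - \delta_{d,a} e_{c,b}$ gives $[e_{j,k+n}, e_{i,j}] = \delta_{j,i} e_{i,k+n} - \delta_{j,i} e_{j,j}\cdot(\text{correction})$; more carefully, $[e_{j,k+n},e_{i,j}] = \delta_{k+n,i}e_{j,j} - \delta_{j,j}e_{i,k+n} = -e_{i,k+n}$ when $i \neq j$ (note $k+n \neq i$ since $k \geq 1$), which matches $-1\cdot e_{i,j}^{0}e_{i,k+n}$, and when $i = j$ one also gets $-e_{i,k+n} = ((e_{i,i}-1) - e_{i,i})e_{i,k+n}$, confirming both formulas at $m=1$.

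For the inductive step I would write $e_{i,j}^{m} = e_{i,j} \cdot e_{i,j}^{m-1}$ and use the Leibniz-type rule
\[
[e_{j,k+n}, e_{i,j}^{m}] = [e_{j,k+n}, e_{i,j}]\, e_{i,j}^{m-1} + e_{i,j}\,[e_{j,k+n}, e_{i,j}^{m-1}],
\]
then substitute the inductive hypothesis into the second term. In the case $i \neq j$, the elements $e_{i,j}$ and $e_{i,k+n}$ commute (check: $[e_{i,j}, e_{i,k+n}] = \delta_{j,i}e_{i,k+n} - \delta_{k+n,i}e_{i,j} = 0$), so everything collapses cleanly to $-m\, e_{i,j}^{m-1} e_{i,k+n}$. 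In the case $i = j$, the subtlety is that $e_{i,i}$ and $e_{i,k+n}$ do \emph{not} commute: $[e_{i,i}, e_{i,k+n}] = e_{i,k+n}$, equivalently $e_{i,k+n}\, e_{i,i} = (e_{i,i}-1)\, e_{i,k+n}$. Using this relation one rewrites $e_{i,i}\big((e_{i,i}-1)^{m-1} - e_{i,i}^{m-1}\big)e_{i,k+n}$ by pushing the polynomial factor in $e_{i,i}$ across $e_{i,k+n}$, or more simply notes that all the relevant terms are polynomials in the single variable $e_{i,i}$ times $e_{i,k+n}$ on the right, so the computation becomes a polynomial identity: $-e_{i,k+n} \cdot e_{i,i}^{m-1}$ contributes, after commuting, $-(e_{i,i}-1)^{m-1}e_{i,k+n}$ from the first term, and $e_{i,i}\big((e_{i,i}-1)^{m-1} - e_{i,i}^{m-1}\big)$ from the second — but one must be careful about whether the shift lands on the left or right factor.

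The cleanest bookkeeping is to commute every occurrence of $e_{i,k+n}$ all the way to the right first, so that each side is expressed as $p(e_{i,i})\, e_{i,k+n}$ for a polynomial $p$, and then verify the scalar polynomial identity $p_{\text{LHS}}(t) = p_{\text{RHS}}(t)$ in $\bb{C}[t]$. Concretely, after normalizing, the claim $i=j$ reduces to checking
\[
-\,(t-1)^{m-1} \;+\; t\big((t-1)^{m-1} - t^{m-1}\big)\Big|_{\text{shifted}} \;=\; (t-1)^{m} - t^{m},
\]
which is an elementary telescoping/binomial identity. The main obstacle is purely this noncommutative bookkeeping in the $i=j$ case: keeping track of which power of $e_{i,i}$ gets shifted to $e_{i,i}-1$ when it moves past $e_{i,k+n}$, and ensuring the induction hypothesis is applied with the commutator in the correct position relative to the leading $e_{i,j}$ factor. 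Once the reduction to a polynomial identity in one variable is made, the rest is routine, and the $i \neq j$ case is essentially immediate from the two commutativity facts noted above.
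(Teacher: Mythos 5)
Your proof is correct and follows the same route the paper indicates (induction on $m$, using the derivation property of the adjoint action and the basic commutator relations). One small point to tidy up: once you have pushed $e_{i,k+n}$ to the right in the first term (via $e_{i,k+n}\,e_{i,i}^{m-1} = (e_{i,i}-1)^{m-1}e_{i,k+n}$), the second term $e_{i,i}\big((e_{i,i}-1)^{m-1}-e_{i,i}^{m-1}\big)e_{i,k+n}$ already has $e_{i,k+n}$ on the right and needs no further shift, so the polynomial identity to check is simply
\[
-(t-1)^{m-1} + t\big((t-1)^{m-1} - t^{m-1}\big) = (t-1)^{m} - t^{m},
\]
with no ``shifted'' modifier, and it holds trivially since the left side equals $(t-1)(t-1)^{m-1} - t^{m}$.
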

\begin{proof}
 This follows easily by induction on $m$.
\end{proof}
Fix a PBW basis of $U(\ca{A})$ of form
\[\{ e_{11}^{l_{11}}e_{12}^{l_{12}} \cdots e_{1n}^{l_{1n}} e_{21}^{l_{21}} \cdots \cdots  e_{n1}^{l_{n1}} \cdots e_{nn}^{l_{nn}}   | l_{ij} \in \bb{N}\},\]

Then $U(\ca{A}) \simeq M_{I}$ has a filtration:
\[ M_{I}^{(0)} \subset M_{I}^{(1)} \subset M_{I}^{(2)} \subset \cdots \]
where $M_{I}^{(m)}$ is the span of all monomials $f$ with $\deg f := \sum_{ij}l_{ij} \leq m$. 

\begin{lemma}
\label{mod}
 For each $1 \leq j,k \leq n$, the element $(e_{j,k+n} - \delta_{j,k}) \in U(\ca{B})$ has degree $-1$ with respect to the filtration of $M_{I}$.
 Moreover, the action on an arbitrary monomial in  $M_{I}^{(d)}$ is given by
\[(e_{j,k+n} - \delta_{j,k}) \cdot  e_{11}^{l_{11}} \cdots e_{kj}^{l_{kj}} \cdots  e_{nn}^{l_{nn}} = -l_{kj} \: e_{11}^{l_{11}} \cdots e_{kj}^{l_{kj}-1} \cdots  e_{nn}^{l_{nn}} \mod M_{I}^{(d-2)}.\]
\end{lemma}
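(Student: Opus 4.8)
The plan is to derive the claimed formula directly from the action of $\ca{B}$ computed in the first lemma of this subsection, specialized to $Q=I$. For $Q=I$ the element $e_{j,k+n}\in\ca{B}$ corresponds to the matrix unit $E_{j,k}$ (so $B^T=E_{k,j}$), and the formula reads $e_{j,k+n}\cdot av = \mathrm{tr}(\psi(a).E_{k,j})\,v$, which is the $(j,k)$-entry of the matrix $\psi(a)\in Mat_{n\times n}(\ca{U(A)})$. So the whole statement is equivalent to: the $(j,k)$-entry of $\psi(f)-\delta_{j,k}$, for a PBW monomial $f$ of degree $d$, equals $-l_{kj}\,e_{kj}^{-1}f$ modulo $M_I^{(d-2)}$ (where $e_{kj}^{-1}f$ denotes the monomial with the exponent $l_{kj}$ dropped by one). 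First I would record the associated-graded behaviour of $\psi'$: since $\psi'(A)=A\otimes I - 1\otimes A^T$ and $\psi$ is the multiplicative extension, for a monomial $f=A_1A_2\cdots A_d$ one has $\psi(f)=\prod_i(A_i\otimes I - 1\otimes A_i^T)$; expanding, the top-degree part (degree $d$ in $\ca{U(A)}$) is $f\otimes I$, the degree-$(d-1)$ part is $-\sum_i (A_1\cdots\widehat{A_i}\cdots A_d)\otimes A_i^T$, and all remaining terms lie in $M_I^{(d-2)}\otimes\ca{A}$.

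Next I would observe that the degree-$d$ term $f\otimes I$ contributes exactly $\delta_{j,k}f$ to the $(j,k)$-entry, which is why we subtract $\delta_{j,k}$; hence modulo $M_I^{(d-2)}$ the relevant quantity is the $(j,k)$-entry of $-\sum_i (A_1\cdots\widehat{A_i}\cdots A_d)\otimes A_i^T$. Writing $f = e_{11}^{l_{11}}\cdots e_{nn}^{l_{nn}}$ and letting each $A_i$ run over the matrix units $e_{kj}$ appearing (with multiplicity $l_{kj}$), the transpose $e_{kj}^T = e_{jk} = E_{j,k}$ has $(j,k)$-entry $1$ and all other entries $0$. Therefore only the $l_{kj}$ factors equal to $e_{kj}$ contribute to the $(j,k)$-entry, each contributing $-1$ times the monomial obtained by deleting that factor; since the surviving factors commute with each other only up to lower filtration degree, and we are working modulo $M_I^{(d-2)}$, each of the $l_{kj}$ deletions yields the same class, namely $e_{11}^{l_{11}}\cdots e_{kj}^{l_{kj}-1}\cdots e_{nn}^{l_{nn}}$. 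Summing gives $-l_{kj}$ times that monomial, as claimed; in particular the class is nonzero only when $l_{kj}\geq 1$, and the element has filtration degree $\leq d-1$ on every monomial, so it has degree $-1$ on $M_I$. One can alternatively bypass $\psi$ entirely and argue by induction on $d$ using Lemma~\ref{rel}: the case $i\neq j$ of that lemma directly produces the $-l_{kj}$ and the exponent drop, while the $i=j$ case contributes $((e_{jj}-1)^{l}-e_{jj}^{l})e_{j,k+n}$, whose leading term is $-l\,e_{jj}^{l-1}e_{j,k+n}$ — and after subtracting $\delta_{j,k}$ the stray $e_{j,k+n}$ (resp.\ $e_{j,j+n}-1$) gets absorbed into the inductive hypothesis at one lower degree, i.e.\ into $M_I^{(d-2)}$.

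The main obstacle is purely bookkeeping: making precise that reordering the deleted monomial back into standard PBW form, and commuting the leftover generator $e_{j,k+n}-\delta_{j,k}$ past the remaining $\ca{A}$-monomial, only introduces error terms of filtration degree $\leq d-2$. This is where Lemma~\ref{mod}'s ``mod $M_I^{(d-2)}$'' is doing real work and must not be glossed over — each commutator $[e_{ab},e_{cd}]$ is again a single matrix unit (degree unchanged in $\ca{U(A)}$, but it reduces the count of $\ca{A}$-factors multiplying the surviving $\ca{B}$-element by one), so every correction term that arises when straightening is indeed two filtration degrees below the top. I would handle this cleanly by the induction-on-$d$ route with Lemma~\ref{rel}, peeling off one generator $e_{pq}$ at a time from the left of $f$, since that keeps the combinatorics local and the filtration estimates transparent.
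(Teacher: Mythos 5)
Your proposal is correct, and your primary route (via the algebra homomorphism $\psi$) is genuinely different from the paper's. The paper works directly in $U(\frak{gl}_{2n})$: it observes that $(e_{j,k+n}-\delta_{j,k})\cdot f\cdot v=[e_{j,k+n},f]\cdot v$, expands the commutator with the derivation property of $\mathrm{ad}_{e_{j,k+n}}$ (which picks out exactly the factors $e_{ij}^{l_{ij}}$ of $f$ with column index $j$), and then applies Lemma~\ref{rel} to each $[e_{j,k+n},e_{ij}^{l_{ij}}]$, finally absorbing the leftover $\ca{B}$-generators $e_{i,k+n}$ into $M_I^{(d-2)}$ by the already-established degree-$(\leq -1)$ bound. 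You instead read the action off the $\psi$-formula for the $\ca{B}$-action: you expand $\psi(f)=\prod_i(A_i\otimes I-1\otimes A_i^T)$ using the commutativity of $A_i\otimes I$ with $1\otimes A_j^T$, isolate the degree-$d$ term $f\otimes I$ (which cancels the $\delta_{j,k}$) and the degree-$(d-1)$ term $-\sum_i(A_1\cdots\widehat{A_i}\cdots A_d)\otimes A_i^T$, and then extract the $(j,k)$-matrix entry, noting that $A_i^T$ contributes to it only when $A_i=e_{kj}$. Both arguments land on the same filtration bookkeeping at the end. The $\psi$-expansion has the advantage of making the ``degree drop by exactly one at each application'' structurally transparent and in fact directly sets up the kind of subset-indexed sum that appears in the paper's final alternative formula, while the paper's approach stays entirely inside $U(\frak{gl}_{2n})$ and avoids re-deriving information already encoded in $\psi$. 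Your secondary route via Lemma~\ref{rel} is essentially the paper's proof; and your observation that, since the $l_{kj}$ copies of $e_{kj}$ are adjacent in the fixed PBW order, each deletion gives literally the same monomial (no reordering needed) is a slight simplification over your cautious general remark about commuting factors.
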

\begin{proof}
 We have \[(e_{j,k+n} - \delta_{j,k}) \cdot f = f (e_{j,k+n} - \delta_{j,k}) + [e_{j,k+n} - \delta_{j,k},f] = [e_{j,k+n},f],\]
so the fact that $(e_{j,k+n} - \delta_{j,k})$ has degree $ \leq -1$ follows from the previous lemma and the fact that $ad_{e_{j,k+n}}$ is a derivation.

For the second more precise statement, let $f$ be an arbitrary monomial of degree $d$. For each $i$ let $P_{i}$,$Q_{i}$
 be the monomial factors of $f$ such that $f= P_{i} e_{ij}^{l_{ij}} Q_{i}$ and $e_{ij} \not| P_{i},Q_{i}$. We now calculate
\begin{align*}
 (e_{j,k+n} &- \delta_{j,k}) \cdot f = [e_{j,k+n},f] = \sum_{i}P_{i} [e_{j,k+n},e_{ij}^{l_{ij}}] Q_{i}\\
 &=   P_{j} ((e_{jj}-1)^{l_{jj}}-e_{jj}^{l_{jj}}) e_{j,k+n}\cdot Q_{j}    + \sum_{i \neq j}-l_{ij} \: P_{i} e_{ij}^{l_{ij}-1} e_{i,k+n}\cdot Q_{i}
\end{align*}

By writing $e_{i,k+n} = (e_{i,k+n}-\delta_{ik})+\delta_{ik}$ and using the fact that the first term has negative degree, we see that
\begin{align*}
 (e_{j,k+n} - \delta_{j,k}) \cdot f &=  \delta_{j,k} P_{j} ((e_{jj}-1)^{l_{jj}}-e_{jj}^{l_{jj}}) Q_{j}    + \sum_{i \neq j}- \delta_{ik} l_{ij} \: P_{i} e_{ij}^{l_{ij}-1} Q_{i} \mod M_{I}^{(d-2)}\\
 &=  -\delta_{j,k}l_{jj} \: P_{j} e_{ij}^{l_{ij}-1} Q_{j}   + \sum_{i \neq j}- \delta_{ik} l_{ij} \: P_{i} e_{ij}^{l_{ij}-1} Q_{i} \mod M_{I}^{(d-2)}\\
&=  -\sum_{i}\delta_{ik}l_{ij} \: P_{i} e_{ij}^{l_{ij}-1} Q_{i} \mod M_{I}^{(d-2)}\\
&=  -l_{kj} \: P_{k} e_{kj}^{l_{kj}-1} Q_{k} \mod M_{I}^{(d-2)}.
\end{align*}
The lemma follows.
\end{proof}

\begin{cor}
 For each $1 \leq i,j \leq n$, the action of $(e_{i,j}-\delta_{i,j})$ on $M_{I}$ is surjective. Its kernel is spanned by all monomials not divisible by $e_{ij}$.
\end{cor}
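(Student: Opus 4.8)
Write $D:=e_{i,j}-\delta_{i,j}$ for the operator in question. By Lemma~\ref{mod}, $D$ lowers the filtration degree by one, and it acts on a PBW monomial $m$ of degree $d$ with $e_{ij}$-exponent $l$ by
\[D\cdot m\equiv -l\,m^{-}\pmod{M_{I}^{(d-2)}},\]
where $m^{-}$ is $m$ with the exponent of $e_{ij}$ lowered by one; equivalently, on $\mathrm{gr}\,M_{I}\cong\bb{C}[e_{ab}:1\le a,b\le n]$ the induced operator is $-\partial/\partial e_{ij}$. Both assertions will be deduced from this together with the filtration.

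For surjectivity, define a linear ``antiderivative'' $T\colon M_{I}\to M_{I}$ on the PBW basis by $T(m):=\tfrac{-1}{l+1}\,m^{+}$, where $l$ is the $e_{ij}$-exponent of $m$ and $m^{+}$ is $m$ with that exponent raised by one; thus $T$ raises the filtration degree by one. The displayed formula gives $D\circ T=\mathrm{id}+N$ with $N$ strictly lowering the filtration degree. Since each element of $M_{I}$ lies in some $M_{I}^{(d)}$, on which $N$ is nilpotent, $\mathrm{id}+N$ is invertible with filtration-preserving inverse $\sum_{k\ge0}(-N)^{k}$; hence $D$ is surjective, with right inverse $T\circ(\mathrm{id}+N)^{-1}$. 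The same computation, tracking degrees, shows that $D$ maps $M_{I}^{(d)}$ onto $M_{I}^{(d-1)}$ for every $d$ (alternatively one argues by induction on $d$).

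For the kernel $K:=\ker D$, surjectivity of $D\colon M_{I}^{(d)}\to M_{I}^{(d-1)}$ yields
\[\dim\bigl(K\cap M_{I}^{(d)}\bigr)=\dim M_{I}^{(d)}-\dim M_{I}^{(d-1)},\]
which is the number of PBW monomials of degree exactly $d$; grouping these by the exponent of $e_{ij}$ and telescoping, this equals the number of PBW monomials of degree $\le d$ \emph{not} divisible by $e_{ij}$. So $K$ and the span of all monomials not divisible by $e_{ij}$ have equal dimension inside each $M_{I}^{(d)}$, and it suffices to show that every PBW monomial $a$ not divisible by $e_{ij}$ lies in $K$: those linearly independent vectors then form a basis of $K\cap M_{I}^{(d)}$ in each degree, and taking the union over $d$ identifies $K$. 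To see that $D\cdot a=0$, I would invoke the explicit formula for the $\ca{B}$-action established earlier, which gives $D\cdot(av)=\bigl((\psi(a))_{ij}-\delta_{ij}a\bigr)v$; writing $a$ as a product of generators and expanding $\psi(a)$ as the corresponding product of the matrices $\psi'(A)=A\otimes I-1\otimes A^{T}$ into its $2^{\deg a}$ terms, the empty selection contributes exactly $\delta_{ij}a$ to the $(i,j)$-entry, while one checks that every nonempty selection contributes to that entry only through a factor $e_{ij}$ of $a$, hence nothing when $e_{ij}\nmid a$.

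The formal ingredients here, namely surjectivity and the counting, are immediate from Lemma~\ref{mod}. The one step needing a genuine (though routine) computation is the last one: the cancellation showing that $D$ annihilates every monomial not divisible by $e_{ij}$. This is the crux, and it is exactly what powers the simplicity argument to follow, since combined with surjectivity it lets one push any nonzero vector of a nonzero submodule down into $M_{I}^{(0)}=\bb{C}v$, whose generator $v$ generates all of $M_{I}$.
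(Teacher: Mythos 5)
Your argument for surjectivity --- the antiderivative $T$ on PBW monomials together with the observation that $D\circ T = \mathrm{id}+N$ with $N$ strictly filtration-decreasing and hence locally nilpotent --- is correct, and the dimension count
$\dim(\ker D\cap M_I^{(d)}) = \dim M_I^{(d)} - \dim M_I^{(d-1)} = \binom{n^2+d-1}{d}$,
identified with the number of $e_{ij}$-free PBW monomials of degree $\leq d$, is also correct.

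The step you yourself flag as the crux, however, fails: it is \emph{not} true that $D$ annihilates every PBW monomial not divisible by $e_{ij}$, and the cancellation you sketch in the expansion of $\psi(a)$ does not occur for $n\geq 3$. Take $n=3$ and $(i,j)=(3,1)$, so that (reading the corollary consistently with Lemma~\ref{mod}) the operator is $D = e_{1,n+3}-\delta_{1,3} = e_{1,6}$, the one lowering the $e_{31}$-exponent. The monomial $a = e_{21}e_{32}$ is in PBW order and is not divisible by $e_{31}$, but since $e_{1,6}\cdot v = 0$ one computes
\begin{align*}
D\cdot a v &= [e_{16},e_{21}e_{32}]v = [e_{16},e_{21}]e_{32}v + e_{21}[e_{16},e_{32}]v = -e_{26}e_{32}v,\\
e_{26}e_{32}v &= e_{32}e_{26}v + [e_{26},e_{32}]v = 0 - e_{36}v = -v,
\end{align*}
so $D\cdot a v = v \neq 0$. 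In your $\psi$-expansion this arises from the full selection $S=\{1,2\}$: with $A_1=e_{21},A_2=e_{32}$ the product $A_1^T A_2^T = e_{12}e_{23}=e_{13}$ has a nonzero $(1,3)$-entry, which is precisely the relevant one since $B^T=e_{31}$ and $tr(\psi(a).e_{31})=(\psi(a))_{13}$. So a chain of generators $e_{c_1,j},e_{c_2,c_1},\dots,e_{i,c_{s-1}}$ can route from $j$ to $i$ through intermediate indices without ever passing through $e_{ij}$ itself; the contribution does \emph{not} ``come only through a factor $e_{ij}$ of $a$'' as you claim. (For $n=2$ every such chain does contain $e_{ij}$, which may be why the pattern looked plausible.)

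Because the dimensions match in every filtration degree, your count alone cannot decide anything; equal dimension without containment is inconclusive, and here the containment actually fails. What your argument does correctly establish is the graded statement: the operator induced by $D$ on $\mathrm{gr}\,M_I\cong\bbc[e_{ab}]$ is $-\partial/\partial e_{ij}$, whose kernel is spanned by the $e_{ij}$-free monomials. In $M_I$ itself the kernel is \emph{not} spanned by PBW monomials. As far as I can tell the corollary itself is misstated in the paper (it is given without proof), and fortunately it is not used later: the simplicity proof of Proposition~\ref{sprop} invokes Lemma~\ref{mod} and the leading-term behaviour of $B_p$ directly, not this corollary, so your closing remark that this kernel description is ``exactly what powers the simplicity argument'' is also off.
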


\begin{prop}
\label{sprop}
 The module $M_{I}$ is simple in $U(\ca{A+B})$-mod.
\end{prop}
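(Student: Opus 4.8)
The plan is the standard "minimal-degree element" argument for cyclic modules, built on the corollary to Lemma~\ref{mod}. Let $N$ be a nonzero $U(\ca{A}+\ca{B})$-submodule of $M_{I}$. I claim $N$ must contain the generator $v$ (the element corresponding to $1 \in U(\ca{A})$). Once this is shown, simplicity follows at once: $N$ is in particular a $U(\ca{A})$-submodule, and since $M_{I} \simeq U(\ca{A})$ as a left $\ca{A}$-module with $v \leftrightarrow 1$, we get $M_{I} = U(\ca{A})v \subseteq N$, so $N = M_{I}$.

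To find $v$ inside $N$, pick a nonzero $x \in N$ of minimal degree $d$ with respect to the filtration $M_{I}^{(0)} \subset M_{I}^{(1)} \subset \cdots$, and write $x = x_{d} + x_{d-1} + \cdots + x_{0}$ where $x_{m}$ is the homogeneous degree-$m$ component of $x$ in the fixed PBW basis. I will show $d = 0$; then $x = x_{0}$ is a nonzero scalar, so $v \in N$. Suppose instead $d \geq 1$. Choose a basis monomial $f_{0} = e_{11}^{l_{11}} \cdots e_{nn}^{l_{nn}}$ occurring with nonzero coefficient $c_{f_{0}}$ in $x_{d}$; since $\deg f_{0} = d \geq 1$, some exponent $l_{kj} \geq 1$. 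Now apply the operator $e_{j,k+n} - \delta_{j,k} \in U(\ca{B})$ to $x$. By Lemma~\ref{mod}, this operator lowers degree by at least $1$; on a degree-$d$ basis monomial $f$ it produces $-l_{kj}(f)\,(f/e_{kj})$ modulo $M_{I}^{(d-2)}$ when $e_{kj} \mid f$, and something of degree $\leq d-2$ otherwise. Consequently the degree-$(d-1)$ component of $(e_{j,k+n} - \delta_{j,k}) \cdot x$ is exactly $-\sum c_{f}\, l_{kj}(f)\,(f/e_{kj})$, the sum over the degree-$d$ basis monomials $f$ in the support of $x_{d}$ that are divisible by $e_{kj}$.

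The key point is that this component is nonzero. Distinct such monomials $f$ have distinct quotients $f/e_{kj}$ (their exponent vectors differ precisely in the $e_{kj}$-slot by the same amount), so no cancellation can occur among these terms, and $f_{0}$ contributes the nonzero term $-c_{f_{0}}\, l_{kj}\,(f_{0}/e_{kj})$. Hence $(e_{j,k+n} - \delta_{j,k}) \cdot x$ is a nonzero element of $N$ of degree $d-1 < d$, contradicting the minimality of $d$. Therefore $d = 0$, and the proposition follows.

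I do not expect a genuine obstacle here: the substantive work has already been done in Lemmas~\ref{rel} and~\ref{mod} and their corollary, which pin down both the degree drop and the leading term of the action of $U(\ca{B})$. The only subtlety worth stating carefully is the no-cancellation observation in the last paragraph — i.e. that the leading term of $(e_{j,k+n} - \delta_{j,k}) \cdot x$ does not vanish — since this is exactly what turns the degree-lowering into a usable step; everything else is the routine shrinking argument.
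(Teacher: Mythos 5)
Your proof is correct, and it relies on the same essential ingredient — Lemma~\ref{mod} and the degree filtration on $M_{I}$ — but it is organized differently from the paper's argument. The paper picks an arbitrary nonzero $f$, locates a monomial $p = \prod_{ij} e_{ij}^{l_{ij}}$ of \emph{maximal} degree $d$ in $f$, and then applies the single tailored operator $B_{p} := \prod_{ij}(e_{j,n+i}-\delta_{ij})^{l_{ij}} \in U(\ca{B})$; this has total degree $-d$, so it annihilates every monomial of degree $<d$ outright, and it annihilates every degree-$d$ monomial $q \neq p$ because some exponent in $q$ is strictly smaller than the corresponding $l_{ij}$, making the leading coefficient vanish. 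Thus $B_{p}\cdot f$ is a nonzero constant, and no cancellation issue ever arises because $B_{p}$ is built to kill all competitors in one shot. You instead take the \emph{minimal} degree of a nonzero element of a submodule $N$, apply a single operator $(e_{j,k+n}-\delta_{j,k})$, and invoke the observation that distinct degree-$d$ monomials divisible by $e_{kj}$ have distinct quotients $f/e_{kj}$ to rule out cancellation in the degree-$(d-1)$ component. Both routes are sound; the paper's avoids the no-cancellation step at the cost of constructing the more elaborate operator $B_{p}$, while yours is closer to the classical minimal-degree/descent template and makes the one subtlety (non-vanishing of the leading term) explicit, which is good practice.
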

\begin{proof}
It suffices to show that any $f \in M_{I}$ can be reduced to $1 \in M_{I}^{0}$ via the $\ca{B}$-action. Fix $f \in M_{I}$ and let $p \in M_{I}^{(d)}$ be a nonzero monomial occurring in $f$ with maximal degree $d$.
If $p= \prod_{ij}e_{ij}^{l_{ij}}$ (in the PBW order), it is clear by the previous lemma that $B_{p}:=\prod_{ij}(e_{j,n+i}-\delta_{ij})^{l_{ij}} \in U(\ca{B})$ maps $p$ to a nonzero constant. By the maximality of
 $d$, $B_{p}$ annihilates all other monomials occurring in $f$ so in fact $B_{p} \cdot f \in M_{I}^{(0)}$ is a nonzero constant as desired.  
\end{proof}

\begin{cor}
The module $M_{Q}$ is simple if and only if $Q$ is nonsingular.
\end{cor}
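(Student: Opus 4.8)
The plan is to prove the two implications of the equivalence separately: the ``if'' part is immediate from Proposition~\ref{sprop} via a twist, and the ``only if'' part by producing an explicit proper submodule. Observe first that $M_{Q}$ makes sense for an \emph{arbitrary} $n\times n$ matrix $Q$: since $\ca{B}$ is commutative, any assignment $e_{i,n+j}\mapsto q_{i,j}$ extends to a Lie algebra homomorphism $\ca{B}\to\bb{C}$, so $L_{Q}$ and $M_{Q}=Ind_{\ca{B}}^{\ca{A+B}}L_{Q}\cong U(\ca{A})$ are defined, and the formula of the Lemma above (whose proof never used nonsingularity) still describes the $\ca{B}$-action.

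For the direction ``$Q$ nonsingular $\Rightarrow M_{Q}$ simple'', I would show that $M_{Q}$ is a twist of $M_{I}$. Conjugation in $GL_{2n}(\bb{C})$ by the block matrix $\left(\begin{array}{cc}I&0\\0&Q^{-T}\end{array}\right)$ is a Lie algebra automorphism of $\frak{gl}_{2n}$ preserving $\ca{A}$ and $\ca{B}$, hence restricting to an automorphism $\varphi$ of $\ca{A+B}$ that fixes $\ca{A}$ pointwise and sends $\left(\begin{array}{cc}0&B\\0&0\end{array}\right)$ to $\left(\begin{array}{cc}0&BQ^{T}\\0&0\end{array}\right)$. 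Evaluating the Lemma at $a=1$ shows that the canonical generator of ${}^{\varphi}M_{I}$ is a $\ca{B}$-eigenvector with eigenvalue given by $Q$, so the universal property of induction yields a surjective $\ca{A+B}$-homomorphism $M_{Q}\to{}^{\varphi}M_{I}$; both sides are free of rank one over $U(\ca{A})$ with the generator mapping to the generator, so this is an isomorphism. Since the twisting functor $F_{\varphi}$ is an auto-equivalence and $M_{I}$ is simple by Proposition~\ref{sprop}, the module $M_{Q}\cong{}^{\varphi}M_{I}$ is simple.

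For the direction ``$Q$ singular $\Rightarrow M_{Q}$ not simple'', I would exhibit an explicit proper nonzero $\ca{A+B}$-submodule of $M_{Q}=U(\ca{A})$. Put $\ca{P}:=\{s\in\ca{A}: Q^{T}s=0\}$, the subspace of matrices all of whose columns lie in $\ker Q^{T}$; it is nonzero because $Q$ is singular, and it is a Lie subalgebra of $\ca{A}$ (indeed $Q^{T}(ss')=(Q^{T}s)s'=0$), proper whenever $Q\neq 0$. Let $J:=U(\ca{A})\cdot\ca{P}$, a left ideal of $U(\ca{A})$ and hence an $\ca{A}$-submodule of $M_{Q}$; it is nonzero, and proper because $\ca{P}$ being a Lie subalgebra gives $J=U(\ca{A})\,U(\ca{P})^{+}$, so $M_{Q}/J\cong U(\ca{A})\otimes_{U(\ca{P})}\bb{C}\neq 0$ by PBW (when $Q=0$ this is just $J=U(\ca{A})^{+}$). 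The substance is to check that $J$ is $\ca{B}$-stable. By the Lemma it suffices to show that every entry of $\psi(a)Q\in Mat_{n\times n}(U(\ca{A}))$ lies in $J$ for every $a\in J$. The set $N$ of all such $a$ is a left $U(\ca{A})$-submodule of $U(\ca{A})$, since $\psi(ua)Q=\psi(u)(\psi(a)Q)$ and $J$ is a left ideal; and $N$ contains $\ca{P}$, since for $s\in\ca{P}$ a short computation with $\psi'(s)=s\otimes I-1\otimes s^{T}$ gives $(\psi'(s)Q)_{ij}=Q_{ij}\,s-(s^{T}Q)_{ij}=Q_{ij}\,s\in\ca{P}$, using $s^{T}Q=(Q^{T}s)^{T}=0$. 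Hence $J=U(\ca{A})\ca{P}\subseteq N$, so $\ca{B}\cdot J\subseteq J$, and $J$ is a proper nonzero submodule; thus $M_{Q}$ is not simple.

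I expect the nonsingular direction to be routine once the twisting formalism is in place. The main obstacle is the singular direction: guessing the correct submodule --- note that $\ca{P}$ must be phrased via $Q^{T}$, which is forced by the transpose appearing in $\psi'$ --- and verifying the $\ca{B}$-stability. The computation $(\psi'(s)Q)_{ij}=Q_{ij}s$ for $s\in\ca{P}$ is the heart of the argument, and keeping the indices and transposes straight there is the only delicate point.
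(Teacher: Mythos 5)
Your proof is correct and follows the same two-pronged strategy as the paper: for nonsingular $Q$, realize $M_{Q}$ as a twist of $M_{I}$ by the automorphism of $\ca{A+B}$ sending $B\mapsto B.Q^{T}$ and invoke Proposition~\ref{sprop}; for singular $Q$, produce a proper $\ca{B}$-stable left ideal of $U(\ca{A})$ using $\ker(Q^{T}\cdot-)$ and the identity $s^{T}Q=(Q^{T}s)^{T}=0$. The only cosmetic difference is that the paper works with a single matrix $A$ satisfying $Q^{T}A=0$ and the ideal $U(\ca{A})Av$ (with properness immediate from degree considerations), whereas you take the whole subspace $\ca{P}$ and argue properness via PBW over the subalgebra $\ca{P}$.
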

\begin{proof}
For each nonsingular $S \in \ca{A}$, define $\varphi_{S}: \ca{A+B} \rightarrow \ca{A+B}$ by
\[\varphi_{S}: \left( \begin{array}{cc} A & B \\ 0 & 0  \end{array}\right) \mapsto \left( \begin{array}{cc} A & B.S^{-1} \\ 0 & 0  \end{array}\right).\]
It is easy to verify that $\varphi_{S}$ is a Lie algebra automorphism and that $\varphi_{S} \circ \varphi_{T} = \varphi_{ST}$.
It is also clear that the twisted module ${}^{\varphi_{Q^{-T}}}M_{I}$ is isomorphic to $M_{Q}$.
Since $M_{I}$ is simple by Proposition~\ref{sprop}, and since twisting by automorphisms defines an auto-equivalence on $\mathfrak{gl}_{2n}$-Mod, $M_{Q}$ is also simple for nonsingular $Q$.

Conversely, assume that $Q$ is singular and let $A$ be a nonzero matrix such that $Q^{T}A=0$. We shall show that $U\ca{(A)}Av$ is a proper $\ca{A+B}$-submodule of $M_{Q}$.
The subspace $U\ca{(A)}Av$ is clearly $\ca{A}$-stable. For $a \in U\ca{(A)}$ we compute
\begin{align*}
\left( \begin{array}{cc} 0 & B \\ 0 & 0\end{array} \right) \cdot aAv &= tr(\psi(aA).Q.B^{T})v\\
&= tr(\psi(a).\psi(A).Q.B^{T})v = tr(\psi(a).(A \otimes I -1\otimes A^{T}).Q.B^{T})v\\
&=tr(Q.B^{T}.\psi(a).(A \otimes I))v- tr(\psi(a).A^{T}.Q.B^{T})v \\
&= tr(Q.B^{T}.\psi(a))Av-tr(\psi(a).(Q^{T}.A)^{T}.B^{T})v \\
&=   tr(Q.B^{T}.\psi(a))Av.
\end{align*}
Thus $U\ca{(A)}Av$ is also $\ca{B}$-stable, and is thus a proper submodule of $M_{Q}$.
\end{proof}

\subsubsection{Injectivity and an existence theorem}
Our next goal is to prove that for most $Q$'s, the module $M_{Q}$ is injective when restricted to $\ca{U(B)}$.
We begin by recalling a result about injective envelopes for the trivial module over polynomial rings. For a proof, see for example~\cite[$\S 3$J]{L}.
\begin{lemma}
 Let $k$ be a field, let $R=k[x_{1}, \ldots x_{n}]$ and let $L$ be the trivial $R$-module. Let $E$ be the $R$-module $k[x_{1}^{-1}, \ldots x_{n}^{-1}]$ where $x_{i}$ acts by
\[x_{i} \cdot (x_{1}^{-k_{1}} \cdots x_{n}^{-k_{n}}) = \begin{cases}
                                                     x_{1}^{-k_{1}} \cdots x_{i}^{-k_{i}+1} \cdots x_{n}^{-k_{n}} & \text{if } k_{i} > 0 \\
						     0                                                         & \text{otherwise.}
                                                   \end{cases} \]
Then $E=E(L)$ is the injective envelope of $L$.
\end{lemma}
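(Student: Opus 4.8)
The plan is to verify the two defining properties of an injective envelope: that $E$ is an injective $R$-module, and that $L$ embeds in $E$ as an essential submodule. For injectivity, I would use Baer's criterion, so it suffices to show that every $R$-module homomorphism $\mathfrak{a} \to E$ from an ideal $\mathfrak{a} \trianglelefteq R$ extends to a map $R \to E$. The module $E = k[x_1^{-1},\dots,x_n^{-1}]$ is an Artinian module whose socle is the trivial submodule $L = k \cdot (x_1^0 \cdots x_n^0)$; one way to organize the argument is to first treat the case $n=1$, where $R = k[x]$ is a PID and $E$ is the familiar module $k[x,x^{-1}]/k[x]$, known to be the injective envelope of $k = R/(x)$. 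For general $n$ one can then either induct, using that $E$ is built up as a union of its finite-dimensional submodules $E^{(m)} = \operatorname{span}\{x_1^{-k_1}\cdots x_n^{-k_n} : \sum k_i \le m\}$, or invoke Matlis duality: for the local ring $R_{\mathfrak{m}}$ with $\mathfrak{m} = (x_1,\dots,x_n)$, the injective envelope of the residue field is the Matlis dual of the $\mathfrak{m}$-adic completion, and an explicit computation identifies this dual with $E$.

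Concretely, I would pair $E$ with $R$ via the $k$-bilinear form $\langle x_1^{a_1}\cdots x_n^{a_n},\, x_1^{-b_1}\cdots x_n^{-b_n}\rangle = \prod_i \delta_{a_i,b_i}$, which exhibits $E$ as the graded dual of $R$ in each multidegree and turns the given $R$-action on $E$ into the transpose of multiplication on $R$. Under this identification, a homomorphism $\mathfrak{a}\to E$ corresponds to a linear functional on $R/\mathfrak{a}$ that is continuous for the filtration topology, and since $R/\mathfrak{a}$ is a quotient of $R$, any such functional extends (non-uniquely) to a functional on $R$, i.e. to an element of $E$; this is the extension required by Baer's criterion. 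The essentiality of $L \subset E$ is the statement that every nonzero $R$-submodule of $E$ contains the socle element $1 = x_1^0\cdots x_n^0$: given $0 \ne f \in E$, pick a monomial $x_1^{-k_1}\cdots x_n^{-k_n}$ of maximal total degree occurring in $f$, and observe that $x_1^{k_1}\cdots x_n^{k_n} \cdot f$ is a nonzero scalar multiple of $1$, so $1$ lies in the submodule generated by $f$.

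The main obstacle is the injectivity verification in the multivariate case: unlike $n=1$, the ring is not a PID, ideals can be complicated, and one must genuinely use the duality picture (or a careful induction on $n$ combined with the fact that a direct limit of injectives over a Noetherian ring is injective) to push the extension argument through. Everything else — the bilinear pairing, the socle computation, the essentiality — is routine once the pairing is set up. Since the excerpt itself defers to \cite[\S 3J]{L} for the proof, in the paper I would simply cite it; the above is the argument I would reconstruct if a self-contained proof were wanted.
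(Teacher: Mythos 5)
The paper itself offers no proof of this lemma---it simply cites Lam \cite[$\S 3$J]{L}---so there is no internal argument to compare yours against; what follows is an assessment of your sketch on its own terms. Your treatment of essentiality is correct: if $0\neq f\in E$ and $x^{-\mathbf{b}}$ is a monomial of maximal total degree in the support of $f$, then multiplying by $x^{\mathbf{b}}$ kills every other monomial in the support (any $\mathbf{b}'\geq\mathbf{b}$ componentwise with $\mathbf{b}'\neq\mathbf{b}$ would have strictly larger total degree), leaving a nonzero scalar. The bilinear pairing $\langle x^{\mathbf{a}},x^{-\mathbf{b}}\rangle=\delta_{\mathbf{a},\mathbf{b}}$ does make the stated $R$-action on $E$ adjoint to multiplication on $R$, and the $n=1$ PID case is standard.

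The injectivity argument, however, has a genuine gap. You assert that ``a homomorphism $\mathfrak{a}\to E$ corresponds to a linear functional on $R/\mathfrak{a}$ that is continuous for the filtration topology.'' Under the pairing, what actually corresponds to continuous functionals on $R/\mathfrak{a}$ is $\operatorname{Hom}_R(R/\mathfrak{a},E)=(0:_E\mathfrak{a})$, not $\operatorname{Hom}_R(\mathfrak{a},E)$. Already for $n=1$ and $\mathfrak{a}=(x)$ the claim fails: $\mathfrak{a}\cong R$ as an $R$-module, so $\operatorname{Hom}_R(\mathfrak{a},E)\cong E$ is infinite-dimensional, while $R/\mathfrak{a}=k$ carries only a one-dimensional space of functionals. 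Consequently the step ``any such functional extends to a functional on $R$, i.e.\ to an element of $E$'' does not deliver Baer's criterion, which requires producing, for a given $R$-linear $\phi:\mathfrak{a}\to E$, an element $e\in E$ with $\phi(a)=a\cdot e$ for all $a\in\mathfrak{a}$; this is not the same as extending a functional on a quotient. The alternative routes you mention---induction via $E_n\cong E_{n-1}\otimes_k E_1$ (which needs a nontrivial statement about injectives over tensor products of $k$-algebras), a socle-dimension comparison with $E(L)$ (which presupposes knowing $\dim_k \operatorname{soc}_m E(L)$, itself a Matlis-type input), or honest Matlis duality over $\widehat{R_{\mathfrak{m}}}$---are all viable, but none is carried out, so the injectivity half of the proof remains incomplete.
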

By twisting $E$ by automorphisms we obtain injective envelopes of all $1$-dimensional $R$-modules as follows:

\begin{cor}
With notation as in the previous lemma, for scalars $q_{i} \in k$,
let $L_{q_{1}, \ldots q_{n}}$ be the $1$-dimensional $R$-module with action $x_{i} \cdot v = q_{i} v$. 
Then $E(L_{q_{1}, \ldots q_{n}})  \simeq {}^\varphi E(L)$ where $\varphi$ is the $R$-automorphism mapping $x_{i} \mapsto x_{i}-q_{i}$. 
\end{cor}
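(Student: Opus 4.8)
The plan is to obtain this as a purely formal consequence of two facts: twisting by an algebra automorphism is an exact auto-equivalence of the module category, and every such auto-equivalence preserves injective envelopes. So the real content is just to identify the twist of the trivial module.

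First I would note that $\varphi\colon x_i\mapsto x_i-q_i$ is genuinely a $k$-algebra automorphism of $R=k[x_1,\ldots,x_n]$: it is the substitution induced by an affine translation of $\mathbb{A}^n$, with two-sided inverse $x_i\mapsto x_i+q_i$. Exactly as for the twisting construction recalled earlier for Lie algebras, twisting by $\varphi$ then defines a functor $F_\varphi\colon M\mapsto {}^{\varphi}M$ on $R$-modules with modified action $x\bullet v:=\varphi(x)\cdot v$. On underlying vector spaces and on underlying $k$-linear maps nothing changes, so $F_\varphi$ is additive and exact, and $F_{\varphi^{-1}}$ is a two-sided inverse; hence $F_\varphi$ is an equivalence of abelian categories.

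Next I would invoke the categorical description of the injective envelope as the (essentially unique) minimal injective essential extension. An equivalence of abelian categories preserves injective objects (injectivity is a lifting property phrased in terms of monomorphisms), preserves monomorphisms, and preserves essentiality of an extension $M\hookrightarrow N$, since ``$N$ has no nonzero subobject meeting $M$ in $0$'' is formulated entirely in terms of subobjects, the zero object and intersections. Therefore $F_\varphi$ carries an injective envelope of any $R$-module $N$ to an injective envelope of $F_\varphi(N)$; applied to $N=L$ this yields $E({}^{\varphi}L)\simeq {}^{\varphi}E(L)$.

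Finally I would identify ${}^{\varphi}L$. Since $L=kv$ is the trivial module, the $\bullet$-action of $x_i$ on $v$ is simply multiplication by the constant term of $\varphi(x_i)$, so reading this off $\varphi$ (and the chosen twisting convention) identifies the one-dimensional module ${}^{\varphi}L$ with $L_{q_1,\ldots,q_n}$, and combining with the previous step finishes the proof. I do not expect any genuine obstacle here, as is appropriate for a corollary: the argument is soft once one accepts that twisting is an exact auto-equivalence, and the only point requiring a little care is bookkeeping of the conventions (which side $\varphi$ acts on, and $\varphi$ versus $\varphi^{-1}$) so that the twisted trivial module comes out as $L_{q_1,\ldots,q_n}$ and not as $L_{-q_1,\ldots,-q_n}$.
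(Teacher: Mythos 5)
Your proof is correct and uses exactly the same idea as the paper's (equally terse) proof: identify $L_{q_1,\ldots,q_n}$ with a twist of the trivial module $L$ and invoke that twisting by an automorphism is an auto-equivalence of $R$-mod, hence preserves injective envelopes. The sign bookkeeping you flag at the end is in fact a real, if harmless, wrinkle in the paper's statement: with the twisting convention $x\bullet v=\varphi(x)\cdot v$ fixed earlier in the paper and $\varphi(x_i)=x_i-q_i$, one computes $x_i\bullet v=(x_i-q_i)\cdot v=-q_iv$, so ${}^{\varphi}L\simeq L_{-q_1,\ldots,-q_n}$ rather than $L_{q_1,\ldots,q_n}$; to match the stated conclusion one should take $\varphi(x_i)=x_i+q_i$, or equivalently twist $E(L)$ by $\varphi^{-1}$. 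This does not affect the substance of your argument or the paper's, since twisting by $\varphi$ is an auto-equivalence either way.
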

\begin{proof}
 We have $L_{q_{1}, \ldots q_{n}} \simeq {}^\varphi L$ and since twisting by an automorphism is an auto-equivalence on $R$-mod, the corollary follows.
\end{proof}

\begin{prop}
 For nonsingular matrices $Q$, the module $Res_{U(\ca{B})}^{U(\ca{A+B})} \; M_{Q}$ is injective.
\end{prop}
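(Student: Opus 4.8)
The plan is to reduce the general case to the identity case $Q=I$ and then verify injectivity there by identifying $M_I$, as a $\ca{U(B)}$-module, with the injective envelope $E(L)$ from the lemma. First I would handle the reduction: recall from the corollary on simplicity that $M_Q \simeq {}^{\varphi_{Q^{-T}}}M_I$ as $\ca{A+B}$-modules, where $\varphi_S$ twists the $\ca{B}$-part by $B \mapsto B.S^{-1}$. Restricting to $\ca{B}$, this twist is by an automorphism of $\ca{U(B)}$, and since twisting by an automorphism is an auto-equivalence of $\ca{U(B)}$-mod (hence preserves injectivity), it suffices to prove that $Res_{\ca{U(B)}} M_I$ is injective.

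Next I would set up the identification with $E(L)$. Write $R := \ca{U(B)} \simeq \bb{C}[e_{1,n+1}, \ldots, e_{n,n+n}]$, a polynomial ring in $n^2$ variables; it is convenient to change variables to $t_{j,i} := e_{j,n+i} - \delta_{ij}$, so that $R \simeq \bb{C}[t_{j,i} : 1 \le i,j \le n]$ and, by the corollary to the injective-envelope lemma, the injective envelope of the $1$-dimensional module $L_I$ on which $e_{i,n+j}$ acts by $\delta_{ij}$ (equivalently each $t_{j,i}$ acts by $0$) is ${}^{\varphi}E(L) \simeq \bb{C}[t_{j,i}^{-1}]$ with the lowering action of the $t_{j,i}$. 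The core of the argument is then Lemma~\ref{mod} together with its corollary: these say precisely that each $t_{j,i} = e_{j,k+n} - \delta_{jk}$ acts on the PBW basis $\{\prod e_{ij}^{l_{ij}}\}$ of $M_I$ by lowering the exponent $l_{kj}$ (with coefficient $-l_{kj}$) modulo lower filtration degree, and that $\ker(t_{j,i})$ on $M_I$ is spanned by the monomials not divisible by the corresponding $e_{ij}$. So the plan is to write down the explicit $\bb{C}$-linear bijection $\Phi: M_I \to \bb{C}[t_{j,i}^{-1}]$ sending the PBW monomial $\prod_{ij} e_{ij}^{l_{ij}}$ to $c_{\{l_{ij}\}}\prod_{ij} t_{j,i}^{-l_{ij}}$ for suitable nonzero normalizing scalars $c_{\{l_{ij}\}}$ (chosen to absorb the $-l_{kj}$ factors, e.g. a product of factorials), and then to check that $\Phi$ is an $R$-module isomorphism.

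The step I expect to be the main obstacle is upgrading the "modulo $M_I^{(d-2)}$" statement of Lemma~\ref{mod} to an exact statement suitable for a clean module isomorphism — a priori the action of $t_{j,i}$ on a top-degree monomial has lower-order correction terms, so $\Phi$ as defined above is not literally intertwining on the nose. I see two ways to deal with this. One is to argue more abstractly: the associated graded of $M_I$ with respect to the filtration $M_I^{(m)}$ is, by Lemma~\ref{mod}, isomorphic as an $R$-module (with $R$ acting through its quotient where $t_{j,i}$ act nilpotently... actually through $\mathrm{gr}\,R$) to $E(L)$, and then lift injectivity via a filtration/Mittag-Leffler argument; but this is delicate because injectivity does not in general pass between a module and its associated graded. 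The cleaner route, which I would pursue, is to avoid choosing $\Phi$ by hand and instead verify Baer's criterion or the defining property of $E(L)$ directly: show (i) $L_I$ embeds in $Res_{\ca{U(B)}} M_I$ as the span of $1 \in M_I^{(0)}$ — indeed $t_{j,i}\cdot 1 = 0$ for all $i,j$ — and this copy of $L_I$ is essential, i.e. every nonzero $\ca{U(B)}$-submodule of $M_I$ meets it, which is exactly the content of Proposition~\ref{sprop}; and (ii) $Res_{\ca{U(B)}}M_I$ is injective, for which it suffices by Baer's criterion to extend homomorphisms from ideals of $R$, or more efficiently to exhibit $M_I$ as a direct summand of a product of copies of $E(L)$, or simply to note that $M_I$ and $E(L)$ have the same (finite, namely $\binom{2d+n^2-1}{n^2-1}$-ish) dimension in each filtration degree and that the essential extension $L_I \hookrightarrow M_I$ must therefore embed into $E(L)$ with the image an injective — hence full — submodule. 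In short: $L_I \hookrightarrow M_I$ is an essential extension by Proposition~\ref{sprop}, so $M_I \hookrightarrow E(L_I)$; and a dimension count in each filtration layer (using that both sides are locally finite with matching Hilbert series, which one reads off from Lemma~\ref{mod} on the $M_I$ side and from the monomial basis of $\bb{C}[t_{j,i}^{-1}]$ on the $E(L)$ side) forces this embedding to be onto, whence $M_I \simeq E(L_I)$ is injective. Tracking the Hilbert-series bookkeeping carefully is where the real work lies.
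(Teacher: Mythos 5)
Your final landing point is exactly the paper's proof: $L_Q\hookrightarrow M_Q$ is essential (by the $\mathcal{B}$-reduction argument of Proposition~\ref{sprop}), hence $M_Q$ embeds into the injective envelope of $L_Q$, and a dimension count in the socle filtration layers forces that embedding to be onto. The preliminary reduction to $Q=I$ by twisting and the entertained-then-discarded alternatives (explicit intertwiner $\Phi$, Baer's criterion, direct-summand argument) are harmless detours around what is in the end the same argument.
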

\begin{proof}
 Let $I(L_{Q})$ be the injective envelope of $L_{Q}$. Applying the exact functor $Hom_{\ca{B}}( -,I(L_{Q}))$ to the exact sequence 
\[0 \ra L_{Q} \ra M_{Q} \ra Coker \ra 0\]
we obtain the exact sequence
\[0 \ra Hom_{\ca{B}}( Coker,I(L_{Q})) \ra Hom_{\ca{B}}( M_{Q},I(L_{Q})) \ra Hom_{\ca{B}}(L_{Q},I(L_{Q})) \ra 0.\]
Hence the morphism $L_{Q} \ra I(L_{Q})$ mapping $L_{Q}$ into its injective envelope is the image of some morphism $f:M_{Q} \ra I(L_{Q})$. Since $f$ is nonzero on $span(v)=soc(M_{Q})$,
$f$ is injective. Moreover, for all $k \in \bb{N}$ we have \[\dim soc_{k}(M_{Q}) = \binom{n^{2}+k-2}{k-1}= \dim soc_{k}(I(L_{Q})),\]
which shows that $f$ is surjective. This shows that $f$ is an isomorphism and in particular that $M_{Q}$ is the injective envelope of $L_{Q}$.
\end{proof}

\begin{rmk}
 Indecomposable injectives over noetherian rings R correspond to Spec(R) via $\frak{p} \mapsto $injective envelope of $(R/ \frak{p} )$.
Moreover $L_{Q} = U(\ca{B}) / \mathfrak{m}$ where $\frak{m}$ is the maximal ideal generated by $(e_{i,n+j}-q_{i,j})$, so if $M_{Q}$ is injective, it must be the injective envelope of $U(\ca{B}) / \mathfrak{m}$.
\end{rmk}

\begin{thm}
 For each nonsingular matrix $n \times n$-matrix $Q$ there exists a $\mathfrak{gl}_{2n}$-module $M$ such that
\begin{itemize}
 \item $M$ is generated by a single $\ca{B}$-eigenvector with eigenvalues corresponding to the entries of $Q$.
 \item $Res_{U(\ca{B})}^{U({\mathfrak{gl}_{2n}})} M \simeq U(\ca{A}) \simeq U({\mathfrak{gl}_{n}})$.
\end{itemize}
\end{thm}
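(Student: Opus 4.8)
The plan is to realize the desired module inside a \emph{coinduced} module rather than as a quotient of an induced one; this makes control of $Res_{\ca{A}}$ almost automatic and sidesteps the delicate point that would otherwise arise. Recall that $\ca{A}+\ca{B}$ (the first $n$ rows) is a subalgebra of $\mathfrak{gl}_{2n}$, so coinduction is available.

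First I would form the coinduced $\mathfrak{gl}_{2n}$-module
\[ C := Coind_{\ca{A}+\ca{B}}^{\mathfrak{gl}_{2n}} M_{Q} = Hom_{U(\ca{A}+\ca{B})}\big(U(\mathfrak{gl}_{2n}), M_{Q}\big), \]
where the elements are the left $U(\ca{A}+\ca{B})$-linear maps out of $U(\mathfrak{gl}_{2n})$ (acting by left multiplication) and $\mathfrak{gl}_{2n}$ acts by $(x\cdot f)(u):=f(ux)$. Two maps matter: the evaluation $\varepsilon\colon C\to M_{Q}$, $f\mapsto f(1)$, and the section $\sigma\colon M_{Q}\to C$, $\sigma(m)(u):=um$; both are $(\ca{A}+\ca{B})$-linear and $\varepsilon\circ\sigma=\mathrm{id}_{M_{Q}}$. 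Writing $v$ for the cyclic generator of $M_{Q}$ (so $e_{i,n+j}v=q_{ij}v$), I would set
\[ M := U(\mathfrak{gl}_{2n})\cdot\sigma(v)\ \subseteq\ C. \]
Since $\sigma$ is $(\ca{A}+\ca{B})$-linear, $\sigma(v)$ is again a $\ca{B}$-eigenvector with the same eigenvalues $q_{ij}$, and it generates $M$ by construction; this gives the first bullet.

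The whole argument then rests on one observation: \emph{any nonzero $\mathfrak{gl}_{2n}$-submodule $K$ of $C$ has $\varepsilon(K)\neq 0$.} Indeed, if $0\neq f\in K$, pick $u$ with $f(u)\neq 0$; then $u\cdot f\in K$ and $(u\cdot f)(1)=f(u)\neq 0$. Applying this to $K=M\cap\ker\varepsilon$ (a $\mathfrak{gl}_{2n}$-submodule contained in $\ker\varepsilon$) forces $M\cap\ker\varepsilon=0$, so $\varepsilon|_{M}$ is injective. It is also surjective, since $\varepsilon\big(U(\ca{A}+\ca{B})\sigma(v)\big)=U(\ca{A}+\ca{B})\,\varepsilon(\sigma(v))=U(\ca{A}+\ca{B})v=M_{Q}$, the last equality because $M_{Q}$ is generated by $v$ over $\ca{A}+\ca{B}$. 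Hence $\varepsilon|_{M}\colon M\xrightarrow{\ \sim\ }M_{Q}$ is an isomorphism of $(\ca{A}+\ca{B})$-modules. Restricting to $\ca{A}$ and using that $M_{Q}\simeq U(\ca{A})$ as a left $\ca{A}$-module yields $Res_{\ca{A}}^{\mathfrak{gl}_{2n}}M\simeq U(\ca{A})\simeq U(\mathfrak{gl}_{n})$, the second bullet. As a bonus, combining the displayed observation with the $(\ca{A}+\ca{B})$-simplicity of $M_{Q}$ established above shows that $M$ is itself simple, and $Res_{\ca{B}}M\simeq Res_{\ca{B}}M_{Q}$ is the locally finite injective module from the previous propositions, so $M$ is a generalized Whittaker module; by the $(Ind,Res)$-adjunction $M$ is also a quotient of $Ind_{\ca{A}+\ca{B}}^{\mathfrak{gl}_{2n}}M_{Q}$, matching the framing in the introduction.

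I do not expect a genuine obstacle along this route: once the coinduced module is chosen, the argument collapses to the one-line observation above, and the remaining checks --- that $(x\cdot f)(u)=f(ux)$ defines a $\mathfrak{gl}_{2n}$-action preserving left $U(\ca{A}+\ca{B})$-linearity, and that $\varepsilon,\sigma$ are $(\ca{A}+\ca{B})$-linear --- are routine. The one place difficulty \emph{would} appear is the dual strategy of taking a maximal submodule $N'$ of $Ind_{\ca{A}+\ca{B}}^{\mathfrak{gl}_{2n}}M_{Q}$ with $N'\cap M_{Q}=0$: there one still has to prove that the image of $M_{Q}$ exhausts $Res_{\ca{A}+\ca{B}}(N/N')$, which seems to need the injectivity of $Res_{\ca{B}}M_{Q}$ together with a uniqueness statement for $\ca{B}$-eigenvectors. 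Working inside the coinduced module is precisely what makes that step unnecessary.
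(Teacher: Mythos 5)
Your key observation---that every nonzero $\mathfrak{gl}_{2n}$-submodule of $C=\mathrm{Coind}_{\ca{A}+\ca{B}}^{\mathfrak{gl}_{2n}}M_{Q}$ has nonzero image under the evaluation $\varepsilon$---is correct. But the two steps you build on it are broken, and both sit exactly at the ``delicate point'' you claim to be sidestepping. First, $\sigma\colon M_{Q}\to C$, $\sigma(m)(u):=um$, is not well-defined: $u$ ranges over $U(\mathfrak{gl}_{2n})$, which does not act on $M_{Q}$ (only $\ca{A}+\ca{B}$ does). The natural repair---use the PBW factorization $U(\mathfrak{gl}_{2n})\simeq U(\ca{A}+\ca{B})\otimes U(\ca{C}+\ca{D})$ and kill the second factor via its counit---fails to be $(\ca{A}+\ca{B})$-linear. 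Concretely, with $x=e_{1,n+1}\in\ca{B}$ and $u=e_{n+1,1}\in\ca{C}$ one has $(x\cdot\sigma(m))(u)=\sigma(m)(e_{n+1,1}e_{1,n+1})=\sigma(m)\bigl(e_{1,n+1}e_{n+1,1}+e_{n+1,n+1}-e_{1,1}\bigr)=-e_{1,1}m$, while $\sigma(xm)(u)=0$. So you do not have a canonical $\ca{B}$-eigenvector in $C$ lifting $v$; producing one is genuinely nontrivial.

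Second, even granting a vector $g\in C$ with $\varepsilon(g)=v$ and setting $M:=U(\mathfrak{gl}_{2n})g$, the set $M\cap\ker\varepsilon$ is not a $\mathfrak{gl}_{2n}$-submodule: $\varepsilon$ is only $(\ca{A}+\ca{B})$-linear because $M_{Q}$ is only an $(\ca{A}+\ca{B})$-module, so $\ker\varepsilon$ is only $(\ca{A}+\ca{B})$-stable. Your observation applies to $\mathfrak{gl}_{2n}$-submodules contained in $\ker\varepsilon$; it tells you nothing about $M\cap\ker\varepsilon$, so the injectivity of $\varepsilon|_{M}$ does not follow. Both gaps point to the same missing ingredient: to produce a $\ca{B}$-eigenvector in $C$ evaluating to $v$ and to control the submodule it generates, one must solve for correction terms inside $M_{Q}$, and that is precisely what the paper does using the injectivity of $\mathrm{Res}_{\ca{B}}M_{Q}$ (the elements $a_{d}\cdot v$ and $x$ in its proof). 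Moving to a coinduced picture is a reasonable reformulation, but it does not let you skip that step.
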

\begin{proof}
 As we've seen before, we take $L_{Q}$ as the $1$-dimensional $\ca{B}$-module corresponding to $Q$ and we let $M_{Q} = U(\ca{A}+\ca{B}) \bigotimes_{U(\ca{B})} L_{Q}$.
 Then $M_{Q}$ is injective in $\ca{B}$-mod. Next we define
 \[W:= U(\ca{A}+\ca{B}+\ca{D}) \bigotimes_{U(\ca{A}+\ca{B})} M_{Q}.\]
Fixing $d \in \ca{D}$ we note that $span(v,d\cdot v)$ is a two-dimensional $\ca{B}$-submodule of $W$, and moreover it is a non-split self-extension of $L_{Q}$ with itself.
Now by the injectivity of $M_{Q}$ there exists a morphism $\varphi$ such that the following diagram commutes in $\ca{B}$-mod:
\[\xymatrix@C=2cm@R=2cm{  & span(v,d \cdot v) \ar@{.>}[dl]^{\varphi} \\ M_{Q} & L_{Q} \ar@{^{(}->}[u] \ar@{^{(}->}[l]}\]

Thus there exists $a_{d} \cdot v \in soc_{2}(M_{Q})=\ca{A} \cdot v$ such that $a_{d}\cdot v-d\cdot v$ spans a $1$-dimensional $B$-submodule $S_{d}$ of $W$. The module
 $W' := W / \sum_{d \in \ca{D}} U(\ca{A+B+D})S_{d}$ is then isomorphic to $M_{Q}$ when restricted to $U(\ca{A+B})$.

Next, let $W'' := U(\ca{A+B+C+D}) \bigotimes_{U(\ca{A+B+D})} W'$.
For a fixed $c \in \ca{C}$ we have a $\ca{B}$-submodule $\ca{B}^{2} (c\cdot v)$ with simple top and simple socle, both isomorphic to $L_{Q}$. By similar arguments, there exists 
 $x \in soc_{3}(M_{Q}) = \ca{A}^{2} \cdot v$ such that $x-c \cdot v$ spans a $\ca{B}$-submodule of $W''$. Forming the quotient of all these subs we get the module required by the theorem.
\end{proof}

In the next section we shall give explicit formulas for the elements $a_{d}$ and $x$ of the proof above in order to write down the action on the simple $\frak{gl}_{2n}$-modules explicitly.

\section{Explicit formulas for the $\mathfrak{gl}_{2n}$-modules}
\subsection{Preliminaries}
The following formula will be particularly useful for $m=2$.
\begin{lemma}
\label{glemma}
Let $F:=(e_{j,i})_{i,j}=\sum_{i,j}e_{j,i} \otimes e_{i,j} \in \ca{U(A)} \otimes \ca{A}$.
For any $A,B \in \mathfrak{gl}_{n}$ and for all $m \in \bb{N}$ we have
\[[A,tr(B.F^{m})] = tr ([A,B].F^{m})\]
in $U(\mathfrak{gl}_{n})$ 
\end{lemma}
\begin{proof}
We proceed by induction on $m$. Since $tr(X.F)=X$ the equality clearly holds for $m=1$.
The equation above is linear in both $A$ and $B$ so it suffices to verify it for $A=e_{ij}$, $B=e_{kl}$.
Note that we explicitly have
\[tr(e_{ij}.F^{m+1}) = \sum_{1 \leq r_{1}, \ldots, r_{m}  \leq n} e_{ir_{1}}e_{r_{1}r_{2}} \cdots e_{r_{m}j}.\]
Assume that the equality holds for some fixed $m$. We now compute

\begin{align*}
[e_{ij},&tr(e_{kl}.F^{m+1})] = [e_{ij},\sum_{r_{1}, \ldots, r_{m}} e_{kr_{1}}e_{r_{1}r_{2}} \cdots e_{r_{m}l}]\\
&=\sum_{r_{1}, \ldots, r_{m}}([e_{ij}, e_{kr_{1}}]e_{r_{1}r_{2}} \cdots e_{r_{m}l} + e_{kr_{1}}[e_{ij}, e_{r_{1}r_{2}} \cdots e_{r_{m}l}])\\
&=\sum_{r_{1}, \ldots, r_{m}}(\delta_{jk}e_{ir_{1}} - \delta_{r_{1}i}e_{kj})e_{r_{1}r_{2}} \cdots e_{r_{m}l} + \sum_{r_{1}}e_{kr_{1}}[e_{ij}, \sum_{r_{2}, \ldots r_{m}}e_{r_{1}r_{2}} \cdots e_{r_{m}l}]\\
&=\delta_{jk}tr(e_{il}.F^{m+1}) - e_{kj}\sum_{r_{2}, \ldots, r_{m}}e_{ir_{2}} \cdots e_{r_{m}l} + \sum_{r_{1}}e_{kr_{1}}[e_{ij},tr(e_{r_{1}l}.F^{m})]\\
&=\delta_{jk}tr(e_{il}.F^{m+1}) - e_{kj}tr(e_{il}.F^{m}) + \sum_{r_{1}}e_{kr_{1}}tr([e_{ij},e_{r_{1}l}].F^{m})\\
&=\delta_{jk}tr(e_{il}.F^{m+1}) - e_{kj}tr(e_{il}.F^{m}) + \sum_{r_{1}}e_{kr_{1}}(\delta_{jr_{1}}tr(e_{il}.F^{m})- \delta_{il}tr(e_{r_{1}j}.F^{m}))\\
&=\delta_{jk}tr(e_{il}.F^{m+1}) - e_{kj}tr(e_{il}.F^{m}) + e_{kj}tr(e_{il}.F^{m}) -\delta_{il}\sum_{r_{1}}e_{kr_{1}}tr(e_{r_{1}j}.F^{m}))\\
&=\delta_{jk}tr(e_{il}.F^{m+1}) - \delta_{il}tr(e_{kj}.F^{m+1})\\
&=tr([e_{ij},e_{kl}].F^{m+1}).
\end{align*}
By induction the lemma holds.
\end{proof}

\begin{rmk}
 Fixing $B$ as the identity matrix above we obtain $[A,tr(F^{k})] = 0$ for all $A$ in $\mathfrak{gl}_{n}$ which shows that $tr(F^{k})$ is central in $U(\mathfrak{gl}_{n})$.
In fact, $Z(\mathfrak{gl}_{n}) = \bb{C}[tr(F),tr(F^{2}), \ldots , tr(F^{n})]$. The elements $tr(F^{k})$ are called Gelfand invariants.
\end{rmk}

\subsection{The main result}
We are now ready to state our main result.
Define $\varphi': \ca{A} \rightarrow \ca{U(A)} \otimes \ca{A}$ by 
\[ \varphi': A \mapsto A \otimes I +1 \otimes A.\]
 This is a Lie algebra homomorphism and it
extends to an algebra homomorphism $\varphi: \ca{U(A)} \rightarrow \ca{U(A)} \otimes \ca{A}$.
Also recall that we previously have defined $\psi:  \ca{U(A)} \rightarrow \ca{U(A)} \otimes \ca{A}$ which satisfied $\psi: A \mapsto A \otimes I - 1 \otimes A^{T}$ for $A \in \ca{A}$.
Using these two homomorphisms we now state our main theorem.
\begin{thm}
\label{mainthm}
Define an action of $\mathfrak{gl}_{2n}$ on $M_{I} \simeq U\ca{(A)}$ as follows: for any $a \in U\ca{(A)}$, let
\[ \left( \begin{array}{cc} A & B \\ C & D  \end{array} \right) \cdot a = Aa - aD  +tr(\psi(a).B^{T})  -tr(\varphi(a).F^{2}.C) -tr(\varphi(a).C)tr(F). \tag{1} \]
This is a $\mathfrak{gl}_{2n}$-module structure.
\end{thm}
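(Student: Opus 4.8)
The plan is to verify that formula (1) defines a Lie algebra homomorphism from $\mathfrak{gl}_{2n}$ to $\mathfrak{gl}(U(\ca{A}))$, i.e. that the assignment respects brackets. Since the map is linear in the matrix entries, it suffices to check the bracket relation on pairs of the "building-block" matrices supported in a single one of the four corners $\ca{A},\ca{B},\ca{C},\ca{D}$. This gives ten cases up to the antisymmetry of the bracket: $[\ca{A},\ca{A}]$, $[\ca{A},\ca{B}]$, $[\ca{A},\ca{C}]$, $[\ca{A},\ca{D}]$, $[\ca{B},\ca{B}]$, $[\ca{B},\ca{C}]$, $[\ca{B},\ca{D}]$, $[\ca{C},\ca{C}]$, $[\ca{C},\ca{D}]$, $[\ca{D},\ca{D}]$. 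Several are essentially free: the $\ca{A}$-part acts by left multiplication by the algebra homomorphism-image, so $[\ca{A},\ca{A}]$ is immediate; $[\ca{D},\ca{D}]$ follows since $a \mapsto -aD$ is a right action; $[\ca{A},\ca{D}]$ holds because left and right multiplication commute; and $[\ca{B},\ca{B}]=0$ because $\ca{B}$ acts (by Lemma above applied with $Q=I$) through $tr(\psi(a).B^{T})$, which is a scalar, and scalars commute — matching the fact that $\ca{B}$ is abelian.

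The substantive cases are those that mix $\ca{C}$ (which acts by a degree-$2$ operator) with the others. For $[\ca{A},\ca{C}]$ and $[\ca{C},\ca{D}]$ I would use Lemma~\ref{glemma}: the operator $a \mapsto tr(\varphi(a).F^2.C)$ interacts with left multiplication by $A$ via the commutator identity $[A, tr(B.F^m)] = tr([A,B].F^m)$, together with the fact that $\varphi$ is an algebra homomorphism and $\varphi'(A) = A\otimes I + 1\otimes A$. One must track both the $tr(\varphi(a).F^2.C)$ term and the correction term $tr(\varphi(a).C)tr(F)$; the Gelfand-invariant remark ($tr(F)$ central) is what lets the correction term behave cleanly. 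For $[\ca{B},\ca{C}]$ — whose image in $\mathfrak{gl}_{2n}$ lands in the diagonal block $\ca{A}\oplus\ca{D}$ — one computes the commutator of the multiplication operator $tr(\psi(a).B^T)$ with the degree-$2$ operator and checks it reproduces the action of the appropriate element of $\ca{A}+\ca{D}$; this is where the interplay between $\psi$ (involving $A^T$) and $\varphi$ (involving $A$), and between $F = \sum e_{j,i}\otimes e_{i,j}$ and the transposes, must be reconciled. The remaining cases $[\ca{B},\ca{D}]$ and $[\ca{C},\ca{C}]$ are then comparatively short: $[\ca{C},\ca{C}]=0$ should drop out of the structure of $\varphi$ and $F$ once $[\ca{A},\ca{C}]$ is understood, and $[\ca{B},\ca{D}]$ is a short computation of a scalar operator against a right-multiplication operator.

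The main obstacle I expect is the bookkeeping in the $[\ca{C},\ca{C}]=0$ and $[\ca{B},\ca{C}]$ cases: the degree-$2$ operator involves $F^2$, so commuting two such operators naively produces degree-$3$ terms that must cancel, and getting the $tr(F)$ correction terms to annihilate each other requires careful use of the cyclicity of the (extended) trace and of the identity $\varphi(AB) = \varphi(A)\varphi(B)$ inside $Mat_{n\times n}(U(\ca{A}))$. A clean way to organize this is to first establish, as auxiliary lemmas, the two "exchange" identities describing how $A\cdot(-)$ and $(-)\cdot D$ commute past each of the operators $tr(\psi(-).B^T)$ and $tr(\varphi(-).F^2.C) + tr(\varphi(-).C)tr(F)$, and then assemble the ten bracket checks mechanically. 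I would also double-check that formula (1) agrees on the $\ca{A}+\ca{B}$-part with the Lemma giving the $\ca{B}$-action on $M_Q$ for $Q=I$, and that restricting to $\ca{A}$ recovers left multiplication, so that the already-proved simplicity and restriction properties transfer to this explicit model.
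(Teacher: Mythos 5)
Your overall strategy — verify $[\rho(X),\rho(Y)]=\rho([X,Y])$ on one-corner generators, organize via exchange identities for $A\cdot(-)$ and $(-)\cdot D$, and lean on Lemma~\ref{glemma} for the $\ca{C}$-cases — is reasonable in outline, but it misses the structural step that makes the paper's proof feasible, and it contains one concrete error.

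\textbf{The missing reduction.} The paper does not verify the operator identity $[\rho(X),\rho(Y)]=\rho([X,Y])$ on all of $U(\ca{A})$ by exchange lemmas. Instead it first shows (by a short induction using the identity $X\cdot(Aa)=A(X\cdot a)+[X,A]\cdot a$) that if $X\cdot Y\cdot a - Y\cdot X\cdot a = [X,Y]\cdot a$ holds for a given $a$, then it also holds for $Aa$ for any $A\in\ca{A}$. Since $U(\ca{A})$ is generated from $1$ by left multiplication by $\ca{A}$, this reduces the entire verification to checking $X\cdot Y\cdot 1 - Y\cdot X\cdot 1=[X,Y]\cdot 1$. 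This is the key shortcut: the hard cases $[\ca{B},\ca{C}]$ and $[\ca{C},\ca{C}]$ become computations of elements of $U(\ca{A})$ evaluated at $1$, not operator identities. As you note yourself, commuting two degree-$2$ operators naively produces degree-$3$ terms with delicate cancellations — the reduction to $1$ is precisely what tames this, since at $1$ everything degenerates (e.g.\ $\varphi(1)=1\otimes I$). Your plan, as stated, never makes this reduction explicit, and without it the mechanical ``assemble the ten bracket checks'' step is substantially harder than the calculations actually carried out in the paper.

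\textbf{A concrete error.} You claim that $\ca{B}$ acts ``through $tr(\psi(a).B^{T})$, which is a scalar, and scalars commute.'' This is false for general $a$: $tr(\psi(a).B^{T})$ lies in $U(\ca{A})$, not $\bb{C}$ (e.g.\ for $a=e_{11}$ one gets $e_{11}\,tr(B)-b_{11}$). It is only a scalar when $a=1$, namely $tr(B)$. So your justification for $[\ca{B},\ca{B}]=0$ does not hold as an operator identity; it does hold at $1$, which is exactly why the paper's reduction is needed here too. (The correct conclusion on all of $U(\ca{A})$ instead follows from the $\ca{A}+\ca{B}$-module structure on $M_I$ already established in the preceding section, but that is not the argument you gave.)

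Once you insert the reduction-to-$1$ step, your case decomposition collapses nicely: the paper treats all four cases $[\ca{A},\ast]$ simultaneously (general $X$ against $A_0\in\ca{A}$, evaluated at $1$), then checks $[\ca{B},\ca{B}]$, $[\ca{D},\ca{D}]$, $[\ca{B},\ca{D}]$, $[\ca{C},\ca{D}]$, $[\ca{B},\ca{C}]$, $[\ca{C},\ca{C}]$ at $1$, using Lemma~\ref{glemma} and the centrality of $tr(F)$ as you anticipated. I would fix the $\ca{B}$-scalar claim, state and prove the derivation identity $X\cdot(Aa)=A(X\cdot a)+[X,A]\cdot a$ (it follows from cyclicity of $tr$ and the forms of $\psi'$, $\varphi'$), and then explicitly invoke the reduction before launching into the case analysis.
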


\begin{proof}
 First, for all $X,Y \in \frak{gl}_{2n}$, $A \in \ca{A}$ and $a \in \ca{U(A)}$ we have

 \begin{align*}
X &\cdot Y \cdot Aa - Y \cdot X \cdot Aa=\\
&=A (X \cdot Y \cdot a) + [XY,A]a - A (Y \cdot X \cdot a) - [YX,A]a\\
&=A (X \cdot Y \cdot a - Y \cdot X \cdot a) + X \cdot [Y,A]a - [X,A]\cdot Ya\\
 & \qquad \qquad - Y \cdot [X,A]a - [Y,A] \cdot Xa\\
&=A \cdot [X,Y] a + [X , [Y,A]]a + [Y,[A,X]]a\\
&= A \cdot [X,Y] a - [A , [X,Y]]a\\
&=[X,Y] \cdot Aa.
\end{align*}

This shows that it suffices to check that 
\[X \cdot Y \cdot 1 - Y \cdot X \cdot 1 = [X,Y]\cdot 1 \]
for all $X,Y \in \frak{gl}_{2n}$ in order to prove that the formula in the theorem gives a module structure. 

We first consider the case $Y:=A_{0} \in \ca{A}$. We compute
\begin{align*}
 &\left( \begin{array}{cc}  A & B \\ C & D  \end{array} \right) \cdot \left( \begin{array}{cc}  A_{0} & 0 \\ 0 & 0  \end{array} \right) \cdot 1
 - \left( \begin{array}{cc}  A_{0} & 0 \\ 0 & 0  \end{array} \right) \cdot \left( \begin{array}{cc}  A & B \\ C & D  \end{array} \right) \cdot 1\\
&= AA_{0}- A_{0}D+tr((A_{0} \otimes I - 1 \otimes A_{0}^{T}).B^{T})-tr((A_{0}\otimes I+1\otimes A_{0}).F^{2}.C)\\
 & \qquad -tr((A_{0}\otimes I+1\otimes A_{0}).C)tr(F)\\
 & \qquad - \big( A_{0}A-A_{0}D+A_{0}tr(B^{T})-A_{0}tr(F^{2}.C)-A_{0}tr(C)tr(F) \big)\\
&=AA_{0}- A_{0}D+A_{0}tr(B^{T})+tr(A_{0}^{T}.B^{T})-A_{0}tr(F^{2}.C) -tr(A_{0}.F^{2}.C)\\
 & \qquad -A_{0}tr(C)tr(F)-tr(A_{0}.C)tr(F)\\
 & \qquad -  A_{0}A+A_{0}D-A_{0}tr(B^{T})+A_{0}tr(F^{2}.C)+A_{0}tr(C)tr(F)\\
&=[A,A_{0}]+tr(A_{0}^{T}.B^{T}) -tr(A_{0}.F^{2}.C)-tr(A_{0}.C)tr(F)\\
&=[A,A_{0}]+tr((A_{0}.B)^{T}) -tr(F^{2}.C.A_{0})-tr(C.A_{0})tr(F)\\
&=\left( \begin{array}{cc}  [A,A_{0}] & A_{0}.B \\ C.A_{0} & 0  \end{array} \right) \cdot 1\\
&=\Big[ \left( \begin{array}{cc}  A & B \\ C & D  \end{array} \right) , \left( \begin{array}{cc}  A_{0} & 0 \\ 0 & 0  \end{array} \right) \Big] \cdot 1.
\end{align*}

It remains to check that $X \cdot Y \cdot 1 - Y \cdot X \cdot 1 = [X,Y] \cdot v$ for $X,Y \in \ca{B,C,D}$.
Moreover, since the right side of $(1)$ is linear in $A,B,C,$ and $D$ it suffices to check $(1)$ it for the standard basis elements of $\mathfrak{gl}_{2n}$.\\

When $X,Y \in \ca{B}$ the calculation is easy:
\begin{align*}
 \left(  \begin{array}{cc} 0 & B \\  0 & 0  \end{array}\right) &\cdot \left(  \begin{array}{cc} 0 & B' \\  0 & 0  \end{array}\right)\cdot 1 
- \left(  \begin{array}{cc} 0 & B' \\  0 & 0  \end{array}\right) \cdot \left(  \begin{array}{cc} 0 & B \\  0 & 0  \end{array}\right) \cdot 1\\
 &= \left(  \begin{array}{cc} 0 & B \\  0 & 0  \end{array}\right) \cdot tr(B') 
- \left(  \begin{array}{cc} 0 & B' \\  0 & 0  \end{array}\right) \cdot tr(B)\\
&=tr(B)tr(B') - tr(B')tr(B) = 0 =  \Big[\left(  \begin{array}{cc} 0 & B \\  0 & 0  \end{array}\right) ,\left(  \begin{array}{cc} 0 & B' \\  0 & 0  \end{array}\right) \Big] \cdot 1.
\end{align*}

Similarly, for $X,Y \in \ca{D}$ we have
\begin{align*}
 \left(  \begin{array}{cc} 0 & 0 \\  0 & D  \end{array}\right) &\cdot \left(  \begin{array}{cc} 0 & 0 \\  0 & D'  \end{array}\right) \cdot 1 
- \left(  \begin{array}{cc} 0 & 0 \\  0 & D'  \end{array}\right) \cdot \left(  \begin{array}{cc} 0 & 0 \\  0 & D  \end{array}\right) \cdot 1\\
 &= -\left(  \begin{array}{cc} 0 & 0 \\  0 & D  \end{array}\right) \cdot D' 
+ \left(  \begin{array}{cc} 0 & 0 \\  0 & D'  \end{array}\right) \cdot D\\
&= D'D - DD'  = [D',D] = \left(  \begin{array}{cc} 0 & 0 \\  0 & [D,D']  \end{array}\right) \cdot 1 = \Big[\left(  \begin{array}{cc} 0 & 0 \\  0 & D  \end{array}\right) ,\left(  \begin{array}{cc} 0 & 0 \\  0 & D'  \end{array}\right) \Big] \cdot 1. 
\end{align*}

For $X \in \ca{B}, Y \in \ca{D}$ we get
\begin{align*}
 \left(  \begin{array}{cc} 0 & B \\  0 & 0  \end{array}\right) &\cdot \left(  \begin{array}{cc} 0 & 0 \\  0 & D  \end{array}\right) \cdot 1 
- \left(  \begin{array}{cc} 0 & 0 \\  0 & D  \end{array}\right) \cdot \left(  \begin{array}{cc} 0 & B \\  0 & 0  \end{array}\right)\cdot 1\\
 &= -\left(  \begin{array}{cc} 0 & B \\  0 & 0  \end{array}\right) \cdot D  - tr(B^{T})\left(  \begin{array}{cc} 0 & 0 \\  0 & D  \end{array}\right) \cdot 1\\
&= -tr((D \otimes I - 1 \otimes D^{T}).B^{T}) +tr(B^{T})D \\
&= -Dtr(B^{T})+tr(D^{T}.B^{T}) +tr(B^{T})D = tr((D.B)^{T})\\
&= \left(  \begin{array}{cc} 0 & D.B \\  0 & 0  \end{array}\right) \cdot 1 = \Big[\left(  \begin{array}{cc} 0 & B \\  0 & 0  \end{array}\right) , \left(  \begin{array}{cc} 0 & 0 \\  0 & D  \end{array}\right) \Big] \cdot 1.
\end{align*}

For $X \in \ca{C}, Y \in \ca{D}$ we apply Lemma~\ref{glemma} for $m=1,2$ to obtain
\begin{align*}
 \left(  \begin{array}{cc} 0 & 0 \\  C & 0  \end{array}\right) &\cdot \left(  \begin{array}{cc} 0 & 0 \\  0 & D  \end{array}\right) \cdot 1 
- \left(  \begin{array}{cc} 0 & 0 \\  0 & D  \end{array}\right) \cdot \left(  \begin{array}{cc} 0 & 0 \\  C & 0  \end{array}\right)v \cdot 1 \\
 &= -\left(  \begin{array}{cc} 0 & 0 \\  C & 0  \end{array}\right) \cdot D  + \left(  \begin{array}{cc} 0 & 0 \\  0 & D  \end{array}\right) \cdot (tr(C.F^{2})+tr(C)tr(F))\\
&= tr((D \otimes I + 1 \otimes D).F^{2}.C) + tr((D \otimes I + 1 \otimes D).C)tr(F)\\
&\qquad- (tr(C.F^{2})+tr(C)tr(F))D\\
&=D \: tr(F^{2}.C) + tr(D.F^{2}.C) + D \: tr(C)tr(F) + tr(D.C)tr(F)\\
&\qquad- (tr(C.F^{2})+tr(C)tr(F))D\\
&=[D,tr(C.F^{2})] + tr(C)[D,tr(F)] + tr(D.F^{2}.C) + tr(D.C)tr(F)\\
&=tr([D,C].F^{2})] + tr(C.D.F^{2}) + tr(D.C)tr(F)\\
&=tr(F^{2}.D.C)] + tr(D.C)tr(F)\\
&=\left(  \begin{array}{cc} 0 & 0 \\  -D.C & 0  \end{array}\right) \cdot 1\\
&= \Big[ \left(  \begin{array}{cc} 0 & 0 \\  C & 0  \end{array}\right) , \left(  \begin{array}{cc} 0 & 0 \\  0 & D  \end{array}\right) \Big] \cdot 1.
\end{align*}

Next, for  $X \in \ca{B}, Y \in \ca{C}$, take $X=e_{i,n+j}$ and $Y=e_{n+k,l}$.
We then have
\begin{align*}
 &e_{i,n+j} \cdot e_{n+k,l}\cdot 1 -  e_{n+k,l}\cdot e_{i,n+j} \cdot 1\\
&= -e_{i,n+j} \cdot (tr(e_{kl}.F^{2})+tr(e_{kl})tr(F)) - e_{n+k,l}\cdot tr(e_{ij}^{T})\\
&= -e_{i,n+j} \cdot ((\sum_{r=1}^{n}e_{kr}e_{rl}) + \delta_{kl}tr(F)) +\delta_{ij}(tr(e_{kl}.F^{2})+tr(e_{kl})tr(F))\\
&= -\sum_{r=1}^{n} \big( tr(\psi(e_{kr}e_{rl}).e_{ji}) \big) - \delta_{kl}tr(\psi(tr(F)).e_{ji}) +\delta_{ij}(tr(e_{kl}.F^{2})+tr(e_{kl})tr(F)) \\
&= -\sum_{r=1}^{n}  tr\big((e_{kr}\otimes I - 1 \otimes e_{rk}).(e_{rl} \otimes I - 1 \otimes e_{lr}).e_{ji} \big) - \delta_{kl}tr((tr(F) \otimes I - 1 \otimes tr(F)).e_{ji})\\
 & \qquad +\delta_{ij}(tr(e_{kl}.F^{2})+tr(e_{kl})tr(F)) \\
&= \sum_{r=1}^{n} \Big( -tr(e_{ji}.e_{rk}.e_{lr}) + e_{kr}tr(e_{ji}.e_{lr}) + e_{rl}tr(e_{ji}.e_{rk}) - e_{kr}e_{rl}tr(e_{ji})\Big)\\
 &\qquad + \delta_{kl}\big( tr(e_{ji}tr(F)) - tr(F)tr(e_{ji}) \big) +\delta_{ij}(tr(e_{kl}F^{2})+tr(e_{kl})tr(F))\\
&= \big(-\delta_{kl}tr(e_{ji}tr(F))+ \delta_{li}e_{kj} + \delta_{jk}e_{il} -\delta_{ji}tr(e_{kl}F^{2})\big)\\
& \qquad +\delta_{kl}\delta_{ji} - \delta_{kl}\delta_{ji}tr(F) +\delta_{ij}tr(e_{kl}F^{2})+\delta_{ij}\delta_{kl}tr(F)\\
&= -\delta_{kl}\delta_{ji}+ \delta_{li}e_{kj} + \delta_{jk}e_{il} +\delta_{kl}\delta_{ji}\\
&= \delta_{li}e_{kj} + \delta_{jk}e_{il} = \delta_{jk}e_{il} -\delta_{li}e_{n+k,n+j} = [e_{i,n+j},e_{n+k,l}] \cdot 1\\
\end{align*}

It remains only to show that $(1)$ holds for $X,Y \in \ca{C}$.
Let $X=e_{n+i,j}$ and $Y=e_{n+k,l}$.
In this case we have
\begin{align*}
e_{n+i,j} &\cdot e_{n+k,l}\cdot 1 -  e_{n+k,l}\cdot e_{n+i,j} \cdot 1\\
=& -e_{n+i,j} \cdot \big( tr(e_{kl}.F^{2}) +tr(e_{kl})tr(F)\big) +  e_{n+k,l}\cdot  \big( tr(e_{ij}.F^{2}) +tr(e_{ij})tr(F)\big)\\
=& -e_{n+i,j} \cdot \big( \sum_{r=1}^{n}e_{kr}e_{rl} +\delta_{kl}tr(F)\big) +  e_{n+k,l}\cdot  \big( \sum_{r=1}^{n}e_{ir}e_{rj} +\delta_{ij}tr(F)\big)\\
=& \Bigg(\sum_{r=1}^{n} \Big( tr(e_{ij}.e_{kr}.e_{rl}.F^{2}) + e_{kr}tr(e_{ij}.e_{rl}.F^{2}) + e_{rl}tr(e_{ij}.e_{kr}.F^{2}) + e_{kr}e_{rl}tr(e_{ij}.F^{2})\\
 &+ \big(tr(e_{ij}.e_{kr}.e_{rl}) + e_{kr}tr(e_{ij}.e_{rl}) + e_{rl}tr(e_{ij}.e_{kr}) + e_{kr}e_{rl}tr(e_{ij})\big) tr(F)\Big)\\
 &+ \delta_{kl}\big( tr(e_{ij}.tr(F).F^{2}) + tr(F)tr(e_{ij}.F^{2}) + tr(e_{ij}.tr(F))tr(F) + tr(F)tr(e_{ij})tr(F) \big) \Bigg)\\
&- \Bigg(\sum_{r=1}^{n} \Big( tr(e_{kl}.e_{ir}.e_{rj}.F^{2}) + e_{ir}tr(e_{kl}.e_{rj}.F^{2}) + e_{rj}tr(e_{kl}.e_{ir}.F^{2}) + e_{ir}e_{rj}tr(e_{kl}.F^{2})\\
 &+ \big(tr(e_{kl}.e_{ir}.e_{rj}) + e_{ir}tr(e_{kl}.e_{rj}) + e_{rj}tr(e_{kl}.e_{ir}) + e_{ir}e_{rj}tr(e_{kl})\big) tr(F)\Big)\\
 &+ \delta_{ij}\big( tr(e_{kl}.tr(F).F^{2}) + tr(F)tr(e_{kl}.F^{2}) + tr(e_{kl}.tr(F))tr(F) + tr(F)tr(e_{kl})tr(F) \big) \Bigg)\\
    =&   n \: tr(e_{ij}.e_{kl}.F^{2}) + e_{kj}tr(e_{il}.F^{2}) +\sum_{r}e_{rl}tr(e_{ij}.e_{kr}.F^{2})  + tr(e_{kl}.F^{2})tr(e_{ij}.F^{2})\\
 &+ \big( n \:tr(e_{ij}.e_{kl}) + e_{kj}tr(e_{il}) + \sum_{r}e_{rl}tr(e_{ij}.e_{kr}) + \delta_{ij}tr(e_{kl}.F^{2})\big) tr(F)\\
 &+ \delta_{kl}\big( tr(e_{ij}.F^{2}) + tr(F)tr(e_{ij}.F^{2}) + \delta_{ij}tr(F) + \delta_{ij}tr(F)tr(F) \big) \\
 &- n \: tr(e_{kl}.e_{ij}.F^{2}) - e_{il}tr(e_{kj}.F^{2}) -\sum_{r}e_{rj}tr(e_{kl}.e_{ir}.F^{2})  - tr(e_{ij}.F^{2})tr(e_{kl}.F^{2})\\
 &+ \big( -n \:tr(e_{kl}.e_{ij}) - e_{il}tr(e_{kj}) - \sum_{r}e_{rj}tr(e_{kl}.e_{ir}) - \delta_{kl}tr(e_{ij}.F^{2})\big) tr(F)\\
 &+ \delta_{ij}\big( -tr(e_{kl}.F^{2}) - tr(F)tr(e_{kl}.F^{2}) - \delta_{kl}tr(F) - \delta_{kl}tr(F)tr(F) \big) \\
    =&  n \delta_{jk} tr(e_{il}.F^{2}) + e_{kj}tr(e_{il}.F^{2}) +\delta_{jk}\sum_{r}e_{rl}tr(e_{ir}.F^{2})  + tr(e_{kl}.F^{2})tr(e_{ij}.F^{2})\\
 &+ n\delta_{jk}\delta_{il}tr(F) + e_{kj}\delta_{il}tr(F) + \delta_{jk}e_{il}tr(F) + \delta_{ij}tr(e_{kl}.F^{2})tr(F)\\
 &+ \delta_{kl}tr(e_{ij}.F^{2}) + \delta_{kl}tr(F)tr(e_{ij}.F^{2}) + \delta_{kl}\delta_{ij}tr(F) + \delta_{kl}\delta_{ij}tr(F)^{2}\\
 &- n \delta_{li} tr(e_{kj}.F^{2}) - e_{il}tr(e_{kj}.F^{2}) -\delta_{li}\sum_{r}e_{rj}tr(e_{kr}.F^{2})  - tr(e_{ij}.F^{2})tr(e_{kl}.F^{2})\\
 &- n\delta_{li}\delta_{kj}tr(F) - e_{il}\delta_{kj}tr(F) - \delta_{li}e_{kj}tr(F) - \delta_{kl}tr(e_{ij}.F^{2})tr(F)\\
 &- \delta_{ij}tr(e_{kl}.F^{2}) - \delta_{ij}tr(F)tr(e_{kl}.F^{2}) - \delta_{ij}\delta_{kl}tr(F) - \delta_{ij}\delta_{kl}tr(F)^{2}\\
    =& \delta_{jk}\sum_{r}e_{rl}tr(e_{ir}.F^{2}) -\delta_{li}\sum_{r}e_{rj}tr(e_{kr}.F^{2}) + e_{kj}tr(e_{il}.F^{2})\\
 &+ n \delta_{jk} tr(e_{il}.F^{2}) + \delta_{kl}tr(e_{ij}.F^{2}) - \delta_{ij}tr(e_{kl}.F^{2}) - n \delta_{li} tr(e_{kj}.F^{2}) - e_{il}tr(e_{kj}.F^{2}) \\
 &+[tr(e_{kl}.F^{2}),tr(e_{ij}.F^{2})] \\
\end{align*}
We proceed to compute $[tr(e_{kl}.F^{2}),tr(e_{ij}.F^{2})]$ separately.

\begin{align*}
 [tr(e_{kl}.F^{2})&,tr(e_{ij}.F^{2})] = \sum_{r}[e_{kr}e_{rl},tr(e_{ij}.F^{2})]\\
&= \sum_{r} \Big( e_{kr}[e_{rl},tr(e_{ij}.F^{2})] + [e_{kr},tr(e_{ij}.F^{2})]e_{rl} \Big)\\
&= \sum_{r}\Big( e_{kr}tr([e_{rl},e_{ij}].F^{2}) + tr([e_{kr},e_{ij}].F^{2})e_{rl} \Big)\\
&= \sum_{r}\Big( \delta_{li}e_{kr}tr(e_{rj}.F^{2}) - \delta_{jr}e_{kr}tr(e_{il}.F^{2}) +\delta_{ri}tr(e_{kj}.F^{2})e_{rl} - \delta_{kj}tr(e_{ir}.F^{2})e_{rl}\Big)\\
&=  \delta_{li} tr(e_{kj}.F^{3})- e_{kj}tr(e_{il}.F^{2}) + tr(e_{kj}.F^{2})e_{il} - \delta_{kj}tr(e_{il}.F^{3})
\end{align*}

Inserting this into the previous expression gives
\begin{align*}
    =&  \delta_{jk}\sum_{r}e_{rl}tr(e_{ir}.F^{2}) -\delta_{li}\sum_{r}e_{rj}tr(e_{kr}.F^{2}) + e_{kj}tr(e_{il}.F^{2})\\
 &+ n \delta_{jk} tr(e_{il}.F^{2}) + \delta_{kl}tr(e_{ij}.F^{2}) - \delta_{ij}tr(e_{kl}.F^{2}) - n \delta_{li} tr(e_{kj}.F^{2}) - e_{il}tr(e_{kj}.F^{2}) \\
 &+ \delta_{li} tr(e_{kj}.F^{3})- e_{kj}tr(e_{il}.F^{2}) + tr(e_{kj}.F^{2})e_{il} - \delta_{kj}tr(e_{il}.F^{3})\\
&= \delta_{jk}\sum_{r}\big(tr(e_{ir}.F^{2})e_{rl} + [e_{rl},tr(e_{ir}.F^{2})]\big) -\delta_{li}\sum_{r} \big(tr(e_{kr}.F^{2})e_{rj} + [e_{rj},tr(e_{kr}.F^{2})] \big)\\
 &+ n \delta_{jk} tr(e_{il}.F^{2}) + \delta_{kl}tr(e_{ij}.F^{2}) - \delta_{ij}tr(e_{kl}.F^{2}) - n \delta_{li} tr(e_{kj}.F^{2}) \\
 &+ \delta_{li} tr(e_{kj}.F^{3}) + [tr(e_{kj}.F^{2}),e_{il}] - \delta_{kj}tr(e_{il}.F^{3})\\
&= \delta_{jk}tr(e_{il}.F^{3}) + \delta_{jk}\sum_{r}tr([e_{rl},e_{ir}].F^{2}) -\delta_{li}tr(e_{kj}.F^{3}) -\delta_{li} \sum_{r}tr([e_{rj},e_{kr}].F^{2})\\
 &+ n \delta_{jk} tr(e_{il}.F^{2}) + \delta_{kl}tr(e_{ij}.F^{2}) - \delta_{ij}tr(e_{kl}.F^{2}) - n \delta_{li} tr(e_{kj}.F^{2}) \\
 &+ \delta_{li} tr(e_{kj}.F^{3}) + tr([e_{kj},e_{il}].F^{2}) - \delta_{kj}tr(e_{il}.F^{3})\\
&= \delta_{jk}\big( \delta_{li}tr(tr(F).F^{2}) - n\: tr(e_{il}.F^{2}) \big) -\delta_{li} \big( \delta_{jk}tr(tr(F).F^{2})-n \: tr(e_{kj}.F^{2}) \big)\\
 &+ n \delta_{jk} tr(e_{il}.F^{2}) + \delta_{kl}tr(e_{ij}.F^{2}) - \delta_{ij}tr(e_{kl}.F^{2}) - n \delta_{li} tr(e_{kj}.F^{2}) \\
 &+ \delta_{ij}tr(e_{kl}.F^{2}) - \delta_{lk}tr(e_{ij}.F^{2})\\
&= \delta_{jk}\delta_{li}tr(F^{2}) -\delta_{li}\delta_{jk}tr(F^{2})=0 = [e_{n+i,j},e_{n+k,l}] \cdot 1\\
\end{align*}
This completes the proof.
\end{proof}

\begin{thm}
\label{corthm}
Define an action of $\mathfrak{gl}_{2n}$ on $M_{Q} \simeq U\ca{(A)}$ as follows: for any $a \in U\ca{(A)}$, let
\[ \left( \begin{array}{cc} A & B \\ C & D  \end{array} \right) \cdot a = Aa - aD  +tr(\psi(a).Q.B^{T})  -tr(\varphi(a).F^{2}.Q^{-T}.C) -tr(\varphi(a).Q^{-T}.C)tr(F).\]
This is a $\mathfrak{gl}_{2n}$-module structure.
\end{thm}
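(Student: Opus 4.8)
The plan is to deduce Theorem~\ref{corthm} from Theorem~\ref{mainthm} by a twisting argument, so as to avoid repeating a long direct check of all the commutators. The starting observation is that for $Q=I$ the formula in the statement reduces to formula~(1), so by Theorem~\ref{mainthm} it already defines a $\mathfrak{gl}_{2n}$-module structure on $U(\mathcal{A})$; call this module $M_I$ as before. For general nonsingular $Q$ I want to exhibit the module of Theorem~\ref{corthm} as the twist ${}^{\Phi_Q}M_I$ for a suitable $\Phi_Q\in\mathrm{Aut}(\mathfrak{gl}_{2n})$, which is automatically a module structure because twisting by an automorphism is an auto-equivalence of $\mathfrak{gl}_{2n}\text{-mod}$.

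The automorphism will be the block-diagonal conjugation $\Phi_Q:=\mathrm{Ad}\bigl(\mathrm{diag}(I,Q^{-T})\bigr)$, that is
\[
\Phi_Q:\ \left(\begin{array}{cc}A&B\\C&D\end{array}\right)\longmapsto\left(\begin{array}{cc}A&B\,Q^{T}\\Q^{-T}C&Q^{-T}D\,Q^{T}\end{array}\right).
\]
This is the natural extension to all of $\mathfrak{gl}_{2n}$ of the automorphism $\varphi_{Q^{-T}}$ of $\mathcal{A}+\mathcal{B}$ used in the proof that $M_Q$ is simple exactly when $Q$ is nonsingular: one checks at once that $\Phi_Q$ restricts to $\varphi_{Q^{-T}}$ on $\mathcal{A}+\mathcal{B}$, and that it restores the $\mathcal{B}$-action $a\mapsto tr(\psi(a).Q.B^{T})$ of the $\mathcal{A}+\mathcal{B}$-module $M_Q$. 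It then remains only to substitute $\Phi_Q\left(\begin{smallmatrix}A&B\\C&D\end{smallmatrix}\right)$ into~(1): using $(BQ^{T})^{T}=QB^{T}$, the five terms collapse to $Aa$, $-a\,(Q^{-T}DQ^{T})$, $tr(\psi(a).Q.B^{T})$, $-tr(\varphi(a).F^{2}.Q^{-T}.C)$ and $-tr(\varphi(a).Q^{-T}.C)\,tr(F)$, which is the formula of the theorem, the $\mathcal{D}$-block being read through the identification $\mathcal{D}\cong\mathcal{A}$ as the conjugate $Q^{-T}DQ^{T}$.

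So the substitution itself is entirely routine, and the only thing that genuinely needs attention is picking the automorphism correctly — in particular recognising that the honest extension of $\varphi_{Q^{-T}}$ to $\mathfrak{gl}_{2n}$ must also act on the $\mathcal{D}$-corner, by conjugation $D\mapsto Q^{-T}DQ^{T}$, rather than leaving it fixed; that is the bookkeeping one cannot skip. As a cross-check (or as an alternative route avoiding twisting altogether) one can simply rerun the proof of Theorem~\ref{mainthm}: the brackets with both entries in $\mathcal{A}$ are unchanged, and in every bracket involving $\mathcal{B}$, $\mathcal{C}$ or $\mathcal{D}$ the extra factors of $Q$ and $Q^{-T}$ propagate through exactly the same manipulations — notably through Lemma~\ref{glemma} — without creating any new difficulty.
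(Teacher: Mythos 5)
Your route is precisely the paper's: take the $Q=I$ module from Theorem~\ref{mainthm} and twist by the automorphism $\varphi_{Q^{-T}}$ of $\mathfrak{gl}_{2n}$ — your $\mathrm{Ad}\bigl(\mathrm{diag}(I,Q^{-T})\bigr)$ is the same map written more conceptually, and it is indeed the extension to $\mathfrak{gl}_{2n}$ of the $\varphi_S$ used earlier on $\mathcal{A}+\mathcal{B}$. The paper's proof then merely asserts that the twisted action ``is precisely as in the statement,'' whereas you actually perform the substitution, and this exposes the one point worth flagging. As you correctly compute, the $\mathcal{D}$-term of the twisted action is $-a\,(Q^{-T}DQ^{T})$, not the $-aD$ printed in the statement of Theorem~\ref{corthm}; and the two genuinely differ — with $-aD$ the bracket of $\mathcal{B}$ with $\mathcal{D}$ applied to $1$ would force $D^{T}Q = QD^{T}$, which fails for generic $Q$, so the formula as literally printed is not a $\mathfrak{gl}_{2n}$-module structure. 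Your phrase about ``reading the $\mathcal{D}$-block through the identification $\mathcal{D}\cong\mathcal{A}$ as the conjugate'' is too soft a way of handling this: it should be stated as a correction, namely that the term in the theorem ought to read $-a\,(Q^{-T}.D.Q^{T})$. That this is the intended formula is confirmed by the unlabelled alternative-formula theorem at the end of Section~3, whose corresponding term is $-\prod_i A_i\,(Q^{-T}.D.Q^{T})$. Apart from making that correction explicit, your argument is complete and coincides with the paper's.
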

\begin{proof}
For each nonsingular $S \in Mat_{n \times n}$, define $\varphi_{S}: \mathfrak{gl}_{2n} \rightarrow \mathfrak{gl}_{2n} $ by
\[\varphi_{S}: \left( \begin{array}{cc} A & B \\ C & D  \end{array}\right) \mapsto \left( \begin{array}{cc} A & B.S^{-1} \\ S.C & S.D.S^{-1}  \end{array}\right).\]
It is easy to verify that $\varphi_{S}$ is a Lie algebra automorphism and that $\varphi_{S} \circ \varphi_{T} = \varphi_{S.T}$, so the map $\Xi: Mat_{n \times n}(\bb{C})^{*} \rightarrow Aut(\mathfrak{gl}_{2n})$ with
 $S \mapsto \varphi_{S}$ is an injective algebra homomorphism.
Let $V$ be the $\frak{gl}_{2n}$ module from in Theorem~\ref{mainthm}. Now by the action of $\mathfrak{gl}_{2n}$ on the twisted module $V_{Q}:={}^{\varphi_{Q^{-T}}}V$ is precisely as in the statement of this theorem.
\end{proof}

The modules $V_{Q}$ now satisfy the conditions of Theorem~\ref{introthm} in the introduction:
\begin{proof}
(of Theorem~\ref{introthm})
The module $V_{Q}$ is simple since $Res_{\ca{A+B}}^{\frak{gl}_{2n}}V_{Q} \simeq M_{Q}$ is. That the GK-dimension is $n^{2}$ and that $Res_{\ca{A}}^{\frak{gl}_{2n}} V_{Q} \simeq \ca{U(A)}$
 follows directly from the definition in Theorem~\ref{corthm}. Since the linear maps $tr(\psi(-).B^{T}): \ca{U(A)} \rightarrow \ca{U(A)}$ never increases the degree of a monomial,
the module $Res_{\ca{B}}^{\frak{gl}_{2n}}V_{Q}$ is locally finite. The fourth point follows from similar arguments: the maps $tr(\psi(-).F^{2}.C): \ca{U(A)} \rightarrow \ca{U(A)}$
have degree $2$ and the maps $A(-)$ and $(-)D$ clearly have degree $1$ (compare with Theorem~\ref{corthm}).
Finally, we note that any isomorphism $\varphi:V_{Q} \rightarrow V_{Q'}$ must map the generator of $V_{Q}$ to a multiple of the generator of $V_{Q'}$.
But then $q_{ij}'\varphi(1) = e_{i,n+j} \varphi(1) = \varphi(e_{i,n+j} \cdot 1) = q_{ij}\varphi(1)$, showing that $Q=Q'$ whenever such an isomorphism exists.
\end{proof}

\subsection{Alternative formula}
Since the automorphisms $\varphi$ and $\psi$ themselves are not very explicit, we present another formula for how elements of $\frak{gl}_{2n}$ act on monomials of $\ca{U(A)}$.
We need some more conventions in notation for this formula.

In the argument of the trace functions, any product is by convention to be taken in $Mat_{n \times n}(U(\mathfrak{gl}_{n}))$ 
(in particular we identify $\ca{A}$ with $Mat_{n \times n}(\bb{C})$ here). Outside the trace function all products are in $U(\mathfrak{gl}_{n})$.
When $S \subset \bb{Z}$, the product $\prod_{i \in S}A_{i}$ means that the product is to be taken in order inherited from $\bb{Z}$. For example, $\prod_{i \in \{3,2,5\}}A_{i} = A_{2}A_{3}A_{5}$.
For $S \subset \{1, \ldots, k\}$, we denote by $S^{*}$ the complement $\{1, \ldots, k\} \setminus S$ and by $|S|$ the cardinality of $S$.

\begin{thm}
Let $a = \prod_{i=1}^{k}A_{i}$ be a monomial in $V_{Q}$ (see Theorem~\ref{corthm}). The action of $\frak{gl}_{2n}$ on the monomial $a$ can be written explicitly as follows.
\begin{align*}
&\left( \begin{array}{cc}
   A & B \\
   C & D
  \end{array} \right) \cdot \prod_{i=1}^{k}A_{i} := A \prod_{i=1}^{k}A_{i} - \prod_{i=1}^{k}A_{i}(Q^{-T}.D.Q^{T}) \\
 &+ \sum_{S \subset \{1, \ldots, k\}}\Big(  \prod_{i \in S^{*}} A_{i} \Big) \Big( (-1)^{|S|}tr(B^{T}. \prod_{i \in S} A_{i}^{T}.Q) -   tr( Q^{-T}.C.\prod_{i \in S} A_{i}. F^{2})  - tr( Q^{-T}.C.\prod_{i \in S} A_{i}) tr(F) \Big)
\end{align*}
\end{thm}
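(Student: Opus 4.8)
The plan is to leave the closed formula of Theorem~\ref{corthm} as it is and merely make the algebra homomorphisms $\psi$ and $\varphi$ explicit on a monomial. Since $\psi$ and $\varphi$ are algebra homomorphisms restricting to $\psi'$ and $\varphi'$ on $\ca{A}$, for $a=\prod_{i=1}^{k}A_{i}$ we have
\[\psi(a)=\prod_{i=1}^{k}(A_{i}\otimes I-1\otimes A_{i}^{T}),\qquad \varphi(a)=\prod_{i=1}^{k}(A_{i}\otimes I+1\otimes A_{i}),\]
both products taken in $\ca{U(A)}\otimes\ca{A}\simeq Mat_{n\times n}(\ca{U(A)})$. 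So everything reduces to expanding these two products.

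The key point is that under the above identification $A\otimes I$ is the scalar matrix $A\cdot I$ with $A\in\ca{U(A)}$, and that, although such scalar matrices need not commute with one another, each of them commutes with every matrix having entries in $\bb{C}$. Expanding the product for $\psi(a)$ by choosing at the $i$-th factor either the scalar term $A_{i}\otimes I$ or the constant term $-1\otimes A_{i}^{T}$, I would record the subset $S\subseteq\{1,\ldots,k\}$ of positions where the constant term is picked, slide all scalar factors to the left past the constant ones, and keep each of the two resulting groups in its original order; by the remark just made this is legitimate. With the ordering conventions of the theorem this yields
\[\psi(a)=\sum_{S\subseteq\{1,\ldots,k\}}(-1)^{|S|}\Big(\prod_{i\in S^{*}}A_{i}\Big)\prod_{i\in S}A_{i}^{T},\qquad \varphi(a)=\sum_{S\subseteq\{1,\ldots,k\}}\Big(\prod_{i\in S^{*}}A_{i}\Big)\prod_{i\in S}A_{i},\]
where in each summand $\prod_{i\in S^{*}}A_{i}\in\ca{U(A)}$ acts as a scalar matrix and $\prod_{i\in S}A_{i}^{T}$ (resp.\ $\prod_{i\in S}A_{i}$) is a constant matrix.

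It then remains to feed these expansions into the formula of Theorem~\ref{corthm}. The first two summands, coming from $\ca{A}$ and $\ca{D}$, are already written on monomials and need no change. In each of the three remaining terms the scalar factor $\prod_{i\in S^{*}}A_{i}$ pulls out of the trace using the extended-trace identity $tr(a\cdot M)=a\,tr(M)$ and becomes the left factor standing in front of the bracket in the statement. What remains inside is the trace of a product of matrices all but one of which are constant; for such products the trace is cyclic, since $tr(XM)=tr(MX)$ whenever $M$ has scalar entries. Using this I would bring each trace into the normal form of the theorem, e.g.\ $tr\big((\prod_{i\in S}A_{i}^{T})QB^{T}\big)=tr\big(B^{T}(\prod_{i\in S}A_{i}^{T})Q\big)$ and $tr\big((\prod_{i\in S}A_{i})F^{2}Q^{-T}C\big)=tr\big(Q^{-T}C(\prod_{i\in S}A_{i})F^{2}\big)$, and collect the three contributions with signs $(-1)^{|S|}$, $-1$, $-1$; this is precisely the displayed formula.

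The one genuinely delicate step is the reordering in the middle paragraph: one must be sure that the scalar matrices, which do \emph{not} commute among themselves, really do commute with the constant ones, so that the iterated product collapses to a single sum over subsets $S$ with both partial products taken in the order inherited from $\bb{Z}$; and one must keep track that the transpose (together with the sign $(-1)^{|S|}$) occurs for $\psi$ but not for $\varphi$. Apart from that point and the restricted cyclicity of the trace over $Mat_{n\times n}(\ca{U(A)})$, the rest is routine bookkeeping.
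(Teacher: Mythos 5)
Your direct expansion of $\psi(a)$ and $\varphi(a)$ into a sum over subsets $S\subseteq\{1,\ldots,k\}$ is a valid route and genuinely differs from the paper's, which simply asserts that the result ``follows by induction on $k$ by comparing with the formula in Theorem~\ref{corthm}'' and omits the verification. Your key observation --- that the scalar matrices $A_{i}\otimes I$ commute with the constant matrices $1\otimes A_{i}^{T}$ (resp.\ $1\otimes A_{i}$), even though scalar matrices do not commute among themselves, so the expansion collapses to a sum over subsets with each of the two partial products kept in its $\bb{Z}$-inherited order --- is exactly the right point, and the restricted cyclicity $tr(MN)=tr(NM)$ when $N$ has entries in $\bb{C}$ covers all the reorderings you invoke (including the one involving $F^{2}$, since you only ever cycle a constant matrix past it). The direct expansion also has the advantage of explaining where the sum over subsets comes from, which a bare induction would not.

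There is, however, one genuine gap you glossed over. You say the $\ca{D}$-summand ``needs no change,'' but the formula in Theorem~\ref{corthm} as printed reads $-aD$, while the formula you are proving has $-\big(\prod_{i}A_{i}\big)(Q^{-T}.D.Q^{T})$, and these differ for generic $Q$. In fact $-aD$ is a typo in the statement of Theorem~\ref{corthm}: its proof twists by $\varphi_{Q^{-T}}$, which sends $D$ to $Q^{-T}.D.Q^{T}$, so the twisted action on $M_{Q}$ must carry $-a(Q^{-T}.D.Q^{T})$, agreeing with the present theorem. Your argument is fine once this is made explicit, but as written you appear to be substituting into the printed version of Theorem~\ref{corthm}, which would yield $-aD$ on monomials and hence not the stated formula; you should flag and correct that discrepancy rather than assert that the $\ca{D}$-term needs no change.
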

\begin{proof}
 This follows by induction on $k$ by comparing with the formula in Theorem~\ref{corthm}. The verification is omitted here.
\end{proof}

\noindent Department of Mathematics, Uppsala University, Box 480, SE-751 06, Uppsala, Sweden, email: jonathan.nilsson@math.uu.se

\end{document}